\newcommand{\gt}{\textup{\textbf{GT}\thinspace}}
\newcommand{\gtf}{\textup{\textbf{GT$\mathcal{F}$}\thinspace}}
\title[Generalized topological spaces with associating function]{Generalized topological spaces \\ with associating function}
\author{Tomasz Witczak}
\address{Institute of Mathematics\\ University of Silesia\\ Bankowa~14\\ 40-007 Katowice\\ Poland}
\email{tm.witczak@gmail.com}
\date{}
\theoremstyle{Theorem}
\newtheorem{tw}{Theorem}[section]
\theoremstyle{Lemma}
\newtheorem{lem}[tw]{Lemma}
\theoremstyle{Remark}
\theoremstyle{Remark}
\theoremstyle{Definition}
\newtheorem{df}[tw]{Definition}
\newtheorem{przy}[tw]{Example}
\theoremstyle{Remark}
\begin{document}

\maketitle

\begin{abstract}
Generalized topological spaces in the sense of Cs\'{a}sz\'{a}r have two main features which distinguish them from typical topologies. First, these families of subsets are not closed under (finite) intersections. Second, the whole space may not be open. Hence, some points of the universe may be beyond any open set. In this paper we assume that such points are associated with certain open neighbourhoods by means of a special function $\mathcal{F}$. We study various properties of the structures obtained in this way. We introduce the notions of $\mathcal{F}$-interior and $\mathcal{F}$-closure and we discuss issues of convergence in this new setting. We pay attention to the fact that it is possible to treat our spaces as a semantical framework for modal logic. 
\end{abstract}

\section{Introduction}

Generalized topological spaces have been introduced by Cs\'{a}sz\'{a}r in the last years of twentieth century (see \cite{csaszar} and \cite{genopen}). They are investigated by many authors from all over the world. They have invented generalized analogues of separation axioms (see \cite{sep}, \cite{separ} and \cite{sarsak}), filters (see \cite{filters}), convergence (see \cite{baskar}, \cite{seq} and \cite{sarma}) or topological groups (see \cite{groups}). 

What is quite surprising, is the fact that Cs\'{a}sz\'{a}r's spaces are barely used in formal logic. Recently, we have prepared generalized topological semantics for certain weak modal logics (see \cite{witczak}). We have shown connections between our frames and certain subclasses of neighbourhood frames. Also we have prepared subintuitionistic version of our semantics. 

Our \emph{strong} generalized models (complete with respect to the modal logic $\mathbf{MTN4}$, as we can infer e.g. from \cite{indrze}) turned out to be similar to the \emph{complete extensional abstractions}, investigated by Soldano in \cite{soldano}. However, both his language and goals were different than ours. Moreover, he started from different intuitions. For us the primary goal was to check if it is possible to use Cs\'{a}sz\'{a}r's structures as a semantic tool for logic, while Soldano's approach was more practical (he wanted to model certain aspects of human reasoning and classifying objects in databases).

There is also an interesting paper by J\"{a}rvinen, Kondo and Kortelainen (see \cite{jarvi}). These autors used \emph{interior systems} (which are compatible with Cs\'{a}sz\'{a}r's generalized topologies and Soldano's extensional abstractions) to speak about soundness and completeness of logics $\mathbf{MT4}$ and $\mathbf{MTN4}$. Their approach was motivated by certain reflexions on the approximate reasoning and rough sets.

While working on this topic, we have developed certain purely topological notions and tools which seemed to be interesting. Basically, in Cs\'{a}sz\'{a}r's spaces it is possible that certain points are beyond any open set (and thus beyond maximal open set $\bigcup \mu$). We proposed an intermediate solution: that such points can have non-empty family of open neighbourhoods. Such family of sets is connected with these points by means of a special function $\mathcal{F}$. This approach allows to speak about new types of convergence and "openess" (this "openess" is weaker than the one which is typical for generalized topologies). 

\section{General overview of \gtf-spaces}

\subsection{Basic notions}

First of all, we repeat the very definition of \textit{generalized topological space} in the sense of Cs\'{a}sz\'{a}r (see \cite{csaszar} and \cite{genopen}). 

\begin{df}
Assume that $W$ is a non-empty set (universe) and $\mu \subseteq P(W)$. We say that $\mu$ is a generalized topology on $W$ if and only if: $\emptyset \in \mu$ and $\mu$ is closed under arbitrary unions, i.e. if $J \neq \emptyset$ and for each $i \in J$, $X_{i} \in \mu$, then $\bigcup_{i \in J} X_{i} \in \mu$. 

In such a case we say that $\langle W, \mu \rangle$ is a generalized topological space. The elements of $\mu$ are named $\mu$-open sets (or just open sets, if there is no risk of confusion) and for any $A \subseteq W$, we define $Int(A)$ as the union of all open sets contained in $A$.
\end{df}

Sometimes we shall say that all points from $W \setminus \bigcup \mu$ are \textit{orphaned}. As for the notion of \emph{closed set}, we say that the set $A \subseteq W$ is \emph{closed} if and only if its complement is open. We define $Cl(A)$ (closure of $A$) as the smallest closed set containing $A$. It is easy to show that $Cl(A) = W \setminus Int(W \setminus A)$ (see \cite{geno}).

The second thing to do is to establish our new structure, equipped with an additional function which connects orphaned points with open sets (open neighbourhoods). 

\begin{df}
\label{gtf}
We define $\gtf$-structure as a triple $M_\mu = \langle W, \mu, \mathcal{F} \rangle$ such that $\mu$ is a generalized topology on $W$ and $\mathcal{F}$ is a function from $W$ into $P(P(\bigcup \mu)$ such that:

\begin{itemize}
\item If $w \in \bigcup \mu$, then $[X \in \mathcal{F}_{w} \Leftrightarrow X \in \mu \text{ and } w \in X]$ {\normalfont [$\mathcal{F}_{w}$ is a shortcut for $\mathcal{F}(w)$]}
\item If $w \in W \setminus \bigcup \mu$, then $[X \in \mathcal{F}_{w} \Rightarrow X \in \mu]$
\end{itemize}
\end{df}




\begin{figure}[h]
\centering
\includegraphics[height=4.5cm]{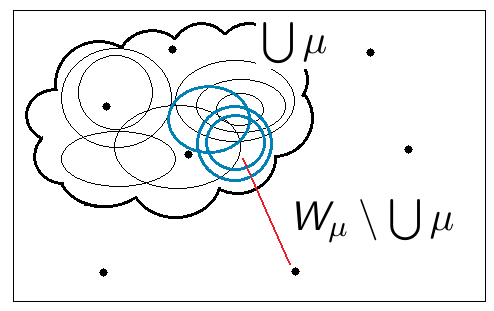}
\caption{Generalized topological model with function $\mathcal{F}$}
\label{fig:obrazek {mtop5}}
\end{figure}

Clearly, if $w \in \bigcup \mu$, then it belongs to \textit{each} of its open neighbourhoods. On the contrary, orphaned points do not belong to \textit{any} of their neighbourhoods. They are only \textit{associated} with them by means of $\mathcal{F}$. 

The next definition is just a useful shortcut:

\begin{df}
Assume that $\langle W, \mu, \mathcal{F} \rangle$ is a \gtf-structure and $A \in \mu$. Then we introduce the following notation: $A^{-1} = \{z \in W; A \in \mathcal{F}_{z}\}$.
\end{df}

Below we shall discuss simple example of \gtf-structure. Its basic form is based strictly on Ex. 3.1. from \cite{baskar}. 

\begin{przy}
\label{zet}
\normalfont{
Consider $\langle W, \mu, \mathcal{F} \rangle$, where $W = \mathbb{Z}$, $\mu = \{ \emptyset, \{1\}, \{1, 3\}, \{1, 3, 5\}, \{1, 3, 5, 7\}, ... \}$, $\mathcal{F}_{n} = \emptyset$ for any $n \in 2 \mathbb{Z}$ and if $n$ is odd, then $\mathcal{F}_{n}$ is just a collection of its open neighbourhoods.  

Of course, this is \gtf-structure, but undoubtedly it is rather degenerated case. However, we may think about more complex versions of this space. For example, we may replace $\mathcal{F}$ by:

\begin{enumerate}

\item $\mathcal{F}'$. Consider $\gamma: 2 \mathbb{Z} \rightarrow 2 \mathbb{Z} + 1$, where $\gamma(x) = \max \{m; m \in 2\mathbb{Z} + 1, m<x\}$. Assume that:

- if $n \in 2 \mathbb{Z} + 1$, then $G \in \mathcal{F}'_{m}$ $\Leftrightarrow$ $G \in \mu$ and $m \in G$. 

- if $n \in 2 \mathbb{Z}$, then $G \in \mathcal{F}'_{n}$ $\Leftrightarrow$ $G \in \mathcal{F}'_{\gamma(n)}$.

For example, $\mathcal{F}'_{8} = \mathcal{F}'_{\gamma(8)} = \mathcal{F}'_{7} = \{\{1, 3, 5, 7\}, \{1, 3, 5, 7, 9\}, \{1, 3, 5, 7, 9, 11\}, ... \}$.

\item $\mathcal{F}''$. It is just like $\mathcal{F}'$ but instead of $\gamma$ we use $\delta(x) = \min \{m; m \in 2 \mathbb{Z} + 1, m > x\}$. Then $\mathcal{F}''_{8} = \{ \{1, 3, 5, 7, 9\}, \{1, 3, 5, 7, 9, 11\}, ...\}$.

\item $\mathcal{F}'''$. We use $\gamma$ again and if $n \in 2 \mathbb{Z}$, then we define: $G \in \mathcal{F}'''_{n}$ $\Leftrightarrow$ $G \in \mu$, $G \neq \emptyset$ and $G \notin \mathcal{F}'''_{\gamma(n) + 2}$.

Now $\mathcal{F}'''_{8} = \{ \{1\}, \{1, 3\}, \{1, 3, 5\}, \{1, 3, 5, 7\}\}$. 

\end{enumerate}

Of course our associating function $\mathcal{F}$ can be very arbitrary but the most interesting cases are those with certain regularities or patterns. Later (when discussing convergence) we shall go back to the examples above. 
}
\end{przy}

In the next subsection we shall use our function $\mathcal{F}$ to establish some analogues of the well-known topological notions (like interior and closure).

\subsection{$\mathcal{F}$-interiors and $\mathcal{F}$-open sets}

The notions of $\mathcal{F}$-interior and $\mathcal{F}$-closure are based on the intuitions arising from the typical understanding of openness and closeness. However, one must note that there will be no full analogy. Our new concepts are somewhat weaker than their topological (and generalized topological) counterparts. This situation can be considered both as a limitation and a strength. 

\begin{df}
\label{fint}
Let $\langle W, \mu, \mathcal{F} \rangle$ be a \gtf and $w \in W$. Assume that $A \subseteq W$. We say that $w \in \mathcal{F}Int(A) \Leftrightarrow \text{ there is } G \in \mathcal{F}_{w} \text{ such that } G \subseteq A$.
\end{df}

In fact, we assume that $G \subseteq A \cap \bigcup \mu$. According to our earlier declarations, the definition above is modeled after the standard definition of interior in (generalized or not) topological spaces. Note, however, that in general we cannot say that $\mathcal{F}Int(A) \subseteq A$. To see this \footnote{Sometimes our examples and counter-examples will be presented only in the sketchy form.}, it is sufficient to consider any $A \in \mu$ and $w \in W \setminus \bigcup \mu$ such that $A \in \mathcal{F}_{w}$. Clearly, $w \in \mathcal{F}Int(A)$ but $w \notin A$. 

Now let us think about different situation: that $A \subseteq W \setminus \bigcup \mu$ and $w \in A$. Then $w \notin \mathcal{F}Int(A)$ (because even if there are some sets in $\mathcal{F}_{w}$, they cannot be contained in $A$, because they belong to $\mu$, while $A$ is beyond $\bigcup \mu$). This example (or rather sketch of reasoning) shows us that sometimes $A \nsubseteq \mathcal{F}Int(A)$. Of course, this lack of inclusion is not as surprising as the first one (in fact, it is normal also for ordinary and generalized interiors). Be as it may, the last example could be even more general: it is enough to assume that $A \cap (W \setminus \bigcup \mu) \neq \emptyset$, $\bigcup \mu \nsubseteq A$, $w \in A \cap (W \setminus \bigcup \mu)$ and for each $G \in \mathcal{F}_{w}$, $G \cap A = \emptyset$. 

For the reasons above, it is sensible to consider at least three concepts related to the notion of openness:

\begin{df}
Let $\langle W, \mu \rangle$ be a \gtf and $A \subseteq W$. We say that $A$ is:

\begin{itemize} 
\item $\mathcal{F}$-open ($\mathcal{F}$o.) iff $A = \mathcal{F}Int(A)$
\item d$\mathcal{F}$-open (d$\mathcal{F}$o.) iff $\mathcal{F}Int(A) \subseteq A$
\item u$\mathcal{F}$-open (u$\mathcal{F}$o.) iff $A \subseteq \mathcal{F}Int(A)$
\end{itemize}

\end{df}

Each of the following lemmas refers to the \gtf-structure $\langle W, \mu, \mathcal{F} \rangle$. Hence, we shall not repeat this assumption. 

\begin{lem}Assume that $A, B \subseteq W$ and $A \subseteq B$. Then $\mathcal{F}Int(A) \subseteq \mathcal{F}Int(B)$. \end{lem}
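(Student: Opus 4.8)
The plan is to unfold the definition of $\mathcal{F}Int$ directly and chase elements. Suppose $w \in \mathcal{F}Int(A)$; I want to show $w \in \mathcal{F}Int(B)$. By Definition \ref{fint}, membership of $w$ in $\mathcal{F}Int(A)$ means precisely that there exists some $G \in \mathcal{F}_{w}$ with $G \subseteq A$. Since we are given $A \subseteq B$, transitivity of set inclusion yields $G \subseteq B$. Thus the very same witness $G \in \mathcal{F}_{w}$ certifies, again by Definition \ref{fint}, that $w \in \mathcal{F}Int(B)$.

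Since $w$ was arbitrary, this establishes $\mathcal{F}Int(A) \subseteq \mathcal{F}Int(B)$, which is exactly the claim. One should note that the subtlety flagged in the text — that we really intend $G \subseteq A \cap \bigcup \mu$ rather than merely $G \subseteq A$ — does not affect the argument: if $G \subseteq A \cap \bigcup \mu$ and $A \subseteq B$, then $G \subseteq B \cap \bigcup \mu$ just as well, because $G$ is unchanged and $A \cap \bigcup \mu \subseteq B \cap \bigcup \mu$. So the monotonicity is robust under either reading of the definition.

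Honestly, there is no real obstacle here: the statement is the standard monotonicity property of an interior-type operator, and it follows immediately because the existential witness $G$ is preserved when the target set grows. The only thing to be careful about is to phrase the element chase cleanly and to cite Definition \ref{fint} in both directions (once to extract the witness, once to reinsert it). No properties of $\mu$ beyond what is packaged in the definition of $\mathcal{F}$ are needed, and in particular closure under unions plays no role.
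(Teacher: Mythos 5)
Your proof is correct and follows exactly the same route as the paper's: extract the witness $G \in \mathcal{F}_{w}$ with $G \subseteq A$ and observe $G \subseteq A \subseteq B$. The extra remark about the $G \subseteq A \cap \bigcup\mu$ reading is a harmless (and accurate) bonus.
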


\begin{proof}This is obvious. If $v \in \mathcal{F}Int(A)$, then there is $G \in \mathcal{F}_{v}$ such that $G \subseteq A \subseteq B$.\end{proof}

\begin{lem}
\label{amu}
Assume that $A \subseteq W$. Then $Int(A) \subseteq \mathcal{F}Int(A)$. In particular, if $A \in \mu$, then $A \subseteq \mathcal{F}Int(A)$. \end{lem}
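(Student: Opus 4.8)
The statement has two parts. For the first inclusion $Int(A) \subseteq \mathcal{F}Int(A)$, the plan is to take an arbitrary $w \in Int(A)$ and produce a witness $G \in \mathcal{F}_w$ with $G \subseteq A$. By definition of $Int$, there is some open set $O \in \mu$ with $w \in O \subseteq A$. The natural candidate for the witness is $O$ itself. The key observation is that since $w \in O \in \mu$, we have $w \in \bigcup \mu$, so $w$ is not orphaned; hence the first clause of Definition~\ref{gtf} applies and gives $O \in \mathcal{F}_w$ precisely because $O \in \mu$ and $w \in O$. Combined with $O \subseteq A$, this shows $w \in \mathcal{F}Int(A)$ by Definition~\ref{fint}.

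For the second assertion, suppose $A \in \mu$ and let $w \in A$. Then $w \in \bigcup \mu$, and taking $G = A$ we have $A \in \mu$ and $w \in A$, so again the first clause of Definition~\ref{gtf} yields $A \in \mathcal{F}_w$. Since trivially $A \subseteq A$, we conclude $w \in \mathcal{F}Int(A)$, i.e. $A \subseteq \mathcal{F}Int(A)$. Alternatively, one can simply note that $A \in \mu$ implies $Int(A) = A$ and invoke the first part, but the direct argument is equally short.

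The only subtlety — and it is a very mild one rather than a genuine obstacle — is making sure the witness set is legitimately drawn from $\mathcal{F}_w$, which hinges entirely on recognizing that every point that lies in an open set is a member of $\bigcup\mu$ and therefore falls under the first (bi-conditional) clause of the definition of a \gtf-structure, not the weaker second clause for orphaned points. Once that is in place, both inclusions are immediate from unravelling Definition~\ref{fint}. I do not anticipate any real difficulty here; the lemma is essentially a bookkeeping consequence of how $\mathcal{F}$ is defined on non-orphaned points.
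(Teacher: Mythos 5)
Your proof is correct and follows essentially the same route as the paper: pick $w \in Int(A)$, take an open $G$ with $w \in G \subseteq A$, and observe that $w \in \bigcup\mu$ forces $G \in \mathcal{F}_w$ by the first clause of the definition of a \gtf-structure. The treatment of the second assertion (via $Int(A)=A$ for $A\in\mu$, or directly with witness $A$) matches the paper's intent as well.
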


\begin{proof}If $Int(A) \neq \emptyset$ and $w \in Int(A)$, then there is certain $G \in \mu$ such that $w \in G \subseteq A$. Because $w \in \bigcup \mu$, $G \in \mathcal{F}_{w}$.

If $Int(A) = \emptyset$, then the result is obvious. \end{proof}

\begin{lem}
If $A \subseteq W$, then $\mathcal{F}Int(A) \cap \bigcup \mu = Int(A)$. 
\end{lem}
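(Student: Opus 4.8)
The plan is to prove the equality by establishing the two inclusions separately, using Lemma~\ref{amu} for one direction and the defining property of $\mathcal{F}$ at non-orphaned points for the other.

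First I would handle $Int(A) \subseteq \mathcal{F}Int(A) \cap \bigcup \mu$. By Lemma~\ref{amu} we already know that $Int(A) \subseteq \mathcal{F}Int(A)$, so it remains only to observe that $Int(A) \subseteq \bigcup \mu$. This is immediate, since $Int(A)$ is by definition a union of $\mu$-open sets and is therefore itself $\mu$-open (in particular a subset of $\bigcup \mu$). Intersecting the two facts yields the desired inclusion.

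For the reverse inclusion $\mathcal{F}Int(A) \cap \bigcup \mu \subseteq Int(A)$ I would take an arbitrary $w$ in the left-hand side. Since $w \in \mathcal{F}Int(A)$, Definition~\ref{fint} supplies some $G \in \mathcal{F}_{w}$ with $G \subseteq A$. Now I invoke the hypothesis $w \in \bigcup \mu$: by the first clause of Definition~\ref{gtf}, the membership $G \in \mathcal{F}_{w}$ is equivalent to $G \in \mu$ together with $w \in G$. Hence $G$ is a $\mu$-open set satisfying $w \in G \subseteq A$, which is precisely what it means for $w$ to lie in $Int(A)$.

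I do not anticipate a genuine obstacle here; the only point requiring a moment's care is recalling that for $w \in \bigcup \mu$ the family $\mathcal{F}_{w}$ consists \emph{exactly} of the $\mu$-open neighbourhoods of $w$ — this is what forces the witness $G$ to be genuinely open and to contain $w$, and it is the crux of the second inclusion. One may also note in passing that this statement sharpens Lemma~\ref{amu}: restricting $\mathcal{F}Int(A)$ to the non-orphaned part of the space recovers exactly the ordinary generalized interior, so that any ``extra'' points of $\mathcal{F}Int(A)$ are necessarily orphaned.
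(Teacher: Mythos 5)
Your proof is correct and follows essentially the same route as the paper: Lemma~\ref{amu} together with $Int(A) \subseteq \bigcup \mu$ for one inclusion, and the defining clause of $\mathcal{F}$ at points of $\bigcup \mu$ (forcing the witness $G$ to be open and to contain $w$) for the other. No gaps.
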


\begin{proof}
($\subseteq$) If $v \in \mathcal{F}Int(A) \cap \bigcup \mu$, then there is $G \in \mathcal{F}_{v}$ such that $G \subseteq A$. Of course, $v \in G$ (because $v \in \bigcup \mu$). Hence, $v \in Int(A)$.

($\supseteq$) If $v \in Int(A)$, then $v \in \mathcal{F}Int(A)$ (by means of Lemma \ref{amu}). But if $v \in Int(A)$, then $v \in \bigcup \mu$. Hence, $v \in \mathcal{F}Int(A) \cap \bigcup \mu$. 

\end{proof}

\begin{lem}If $A \subseteq W$, then $\mathcal{F}Int(\mathcal{F}Int(A)) \cap \bigcup \mu \subseteq \mathcal{F}Int(A)$. 
\end{lem}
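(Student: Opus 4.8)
The plan is to unwind both $\mathcal{F}Int$ applications and then invoke the previous lemma, which tells us that $\mathcal{F}Int(B) \cap \bigcup \mu = Int(B)$ for any $B \subseteq W$. So first I would take an arbitrary $v \in \mathcal{F}Int(\mathcal{F}Int(A)) \cap \bigcup \mu$ and aim to show $v \in \mathcal{F}Int(A)$. Since $v \in \bigcup \mu$, by the previous lemma applied with $B = \mathcal{F}Int(A)$ we immediately get $v \in Int(\mathcal{F}Int(A))$; hence there is $G \in \mu$ with $v \in G \subseteq \mathcal{F}Int(A)$.

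Next I would use the fact that $v \in \bigcup \mu$ to conclude $G \in \mathcal{F}_{v}$ (this is the first clause of Definition \ref{gtf}: for $w \in \bigcup \mu$, membership $X \in \mathcal{F}_w$ is equivalent to $X \in \mu$ and $w \in X$). So to finish it suffices to check $G \subseteq A$. This is where the core of the argument lies: every point of $G$ lies in $\mathcal{F}Int(A)$, and moreover every point of $G$ lies in $\bigcup \mu$ (because $G \in \mu$), so $G \subseteq \mathcal{F}Int(A) \cap \bigcup \mu = Int(A) \subseteq A$, using the previous lemma once more together with the elementary fact that $Int(A) \subseteq A$. Therefore $G \in \mathcal{F}_v$ and $G \subseteq A$, which is exactly the condition for $v \in \mathcal{F}Int(A)$.

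I do not expect any serious obstacle here; the only subtlety worth flagging is keeping straight which points are guaranteed to be in $\bigcup \mu$ (namely all points of any $\mu$-open set, in particular all of $G$), since that is what licenses passing from $G \subseteq \mathcal{F}Int(A)$ to $G \subseteq Int(A)$. One could alternatively give a fully self-contained proof by double expansion — from $v \in \mathcal{F}Int(\mathcal{F}Int(A))$ extract $G \in \mathcal{F}_v$ with $G \subseteq \mathcal{F}Int(A)$, then for each $u \in G$ extract $H_u \in \mathcal{F}_u$ with $H_u \subseteq A$, and observe $u \in H_u$ since $u \in G \subseteq \bigcup \mu$ — but routing through the preceding lemma is cleaner and shorter, so that is the route I would take.
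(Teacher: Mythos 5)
Your argument is correct, and it overlaps with the paper's proof in its one essential ingredient: since $v \in \bigcup \mu$, any $G \in \mathcal{F}_v$ must actually contain $v$. But you finish differently. The paper stops almost immediately: having extracted $G \in \mathcal{F}_v$ with $G \subseteq \mathcal{F}Int(A)$, it notes $v \in G \subseteq \mathcal{F}Int(A)$ and is done --- no second unwinding, no appeal to $Int$ at all. You instead route through the identity $\mathcal{F}Int(B) \cap \bigcup\mu = Int(B)$ twice, first to replace $\mathcal{F}Int(\mathcal{F}Int(A)) \cap \bigcup\mu$ by $Int(\mathcal{F}Int(A))$ and then to shrink $G$ into $Int(A)$, so that $G$ itself becomes a witness with $G \subseteq A$. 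That costs a few extra lines but buys something the paper's proof does not state: you actually establish the stronger containment $\mathcal{F}Int(\mathcal{F}Int(A)) \cap \bigcup\mu \subseteq Int(A)$, from which the lemma follows via $Int(A) \subseteq \mathcal{F}Int(A)$ (Lemma \ref{amu}). Both proofs are valid; the paper's is the minimal one, yours is the more informative one. (As a side remark, the ``double expansion'' you sketch as an alternative is also more than is needed --- the inner witnesses $H_u$ never have to be invoked, since $v \in G \subseteq \mathcal{F}Int(A)$ already closes the argument.)
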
 

\begin{proof}
Assume that $v \in \mathcal{F}Int(\mathcal{F}Int(A)) \cap \bigcup \mu$. Then there is $G \in \mathcal{F}_{v}$ such that $G \subseteq \mathcal{F}Int(A)$. It means that for any $u \in G$ there is $H \in \mathcal{F}_{u}$ such that $H \subseteq A$. In particular, this is true for $v$ (because $v \in \bigcup \mu \supseteq G$). Thus $v \in G$ and $v \in \mathcal{F}Int(A)$. 

\end{proof}

\begin{lem}Assume that $A \in \mu$. Then $A^{-1} \subseteq \mathcal{F}Int(A)$. \end{lem}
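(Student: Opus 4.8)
The plan is to unwind the two definitions and observe that $A$ itself serves as the required witnessing neighbourhood. First I would recall that $z \in A^{-1}$ means, by definition, that $A \in \mathcal{F}_{z}$. I would then recall that $z \in \mathcal{F}Int(A)$ means there exists some $G \in \mathcal{F}_{z}$ with $G \subseteq A$ (and, as noted after Definition \ref{fint}, with $G \subseteq A \cap \bigcup \mu$).

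Given $z \in A^{-1}$, the natural candidate for $G$ is $A$ itself: we have $A \in \mathcal{F}_{z}$ by assumption, and trivially $A \subseteq A$. Since $A \in \mu$ we also have $A \subseteq \bigcup \mu$, so $A \cap \bigcup \mu = A$ and the strengthened requirement $G \subseteq A \cap \bigcup \mu$ is met as well. Hence $z \in \mathcal{F}Int(A)$, which establishes $A^{-1} \subseteq \mathcal{F}Int(A)$.

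There is no real obstacle here; the statement is essentially immediate from the definitions, the only point worth a word being the compatibility with the convention $G \subseteq A \cap \bigcup \mu$, which holds precisely because $A$ is assumed to be $\mu$-open. I would therefore keep the proof to a couple of sentences.
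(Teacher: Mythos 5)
Your proof is correct and follows the same route as the paper's: take $A$ itself as the witnessing set $G \in \mathcal{F}_{z}$ with $G \subseteq A$. The extra remark about the convention $G \subseteq A \cap \bigcup \mu$ is a harmless refinement the paper omits.
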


\begin{proof}If $w \in A^{-1}$, then it means that $A \in \mathcal{F}_{w}$. Hence, there is $G \in \mathcal{F}_{w}$, namely $A$, such that $G \subseteq A$. Thus $w \in \mathcal{F}Int(A)$. \end{proof}

\begin{lem}$\mathcal{F}Int(W) = W \Leftrightarrow \text{ for any } w \in W, \mathcal{F}_{w} \neq \emptyset$. \end{lem}

\begin{proof}($\Rightarrow$) Assume that there is $v \in W$ such that $F_{v} = \emptyset$. Then $v \notin \mathcal{F}Int(W)$. ($\Leftarrow$) Assume that $v \notin \mathcal{F}Int(W)$. This means that for any $G \in \mathcal{F}_{v}$, $G \nsubseteq W$. Clearly, this is possible only if there are no any sets in $\mathcal{F}_{v}$. \end{proof}

\begin{lem}$\mathcal{F}Int(\emptyset) = \emptyset \Leftrightarrow \text{ for any } w \in W$, $\emptyset \notin \mathcal{F}_{w}$. \end{lem}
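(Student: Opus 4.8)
The plan is to unwind Definition~\ref{fint} directly, exactly as in the preceding lemma, observing that the only subset of $\emptyset$ is $\emptyset$ itself. So the whole statement reduces to a single equivalence at the level of individual points: for each $w \in W$, we have $w \in \mathcal{F}Int(\emptyset)$ if and only if there exists $G \in \mathcal{F}_{w}$ with $G \subseteq \emptyset$, and since $G \subseteq \emptyset$ is possible only when $G = \emptyset$, this says precisely that $\emptyset \in \mathcal{F}_{w}$.

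For the ($\Leftarrow$) direction I would argue contrapositively: if $\mathcal{F}Int(\emptyset) \neq \emptyset$, pick $w \in \mathcal{F}Int(\emptyset)$; then by the pointwise equivalence above $\emptyset \in \mathcal{F}_{w}$, so the right-hand side fails. For the ($\Rightarrow$) direction, again contrapositively: if $\emptyset \in \mathcal{F}_{w}$ for some $w$, then $w$ witnesses $w \in \mathcal{F}Int(\emptyset)$ (taking $G = \emptyset$), so $\mathcal{F}Int(\emptyset) \neq \emptyset$. Alternatively one can just state the pointwise equivalence once and read off both implications at the same time.

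I do not expect any genuine obstacle here; the only thing to be careful about is the trivial set-theoretic remark that $G \subseteq \emptyset \Rightarrow G = \emptyset$, which is what makes this lemma a clean dual of the previous one (where $G \subseteq W$ is automatic and hence vacuous). It might also be worth a one-line remark that, combined with the earlier lemmas, this shows $\mathcal{F}Int$ behaves like an interior operator on $\emptyset$ exactly when no point is associated with the empty neighbourhood — a mild regularity condition on $\mathcal{F}$ that one may wish to impose.
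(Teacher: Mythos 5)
Your proposal is correct and follows essentially the same route as the paper: both directions are argued by contraposition, resting on the single observation that $G \subseteq \emptyset$ forces $G = \emptyset$, so that $w \in \mathcal{F}Int(\emptyset)$ holds precisely when $\emptyset \in \mathcal{F}_{w}$. Your pointwise formulation merely packages the paper's two contrapositive arguments into one equivalence, which is a harmless stylistic difference.
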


\begin{proof}
($\Rightarrow$) Assume that there is $v \in W$ such that $\emptyset \in \mathcal{F}_{v}$. Then $v \in \mathcal{F}Int(\emptyset)$. Contradiction.

($\Leftarrow$) Suppose that $\mathcal{F}Int(\emptyset) \neq \emptyset$. Then there must be at least one $v \in W$ for which there is $G \in \mathcal{F}_{v}$ such that $G \subseteq \emptyset$. Undoubtedly, $G$ must be empty.

\end{proof}

\begin{lem}Assume that for certain $X \subseteq W$ and for any $A \in \mu$: if $A \neq \emptyset$, then $A \nsubseteq X$. Then $\mathcal{F}Int(X) = \emptyset$ or $\mathcal{F}Int(X) \subseteq Z = \{z \in W; \emptyset \in \mathcal{F}_{z}\}$. \end{lem}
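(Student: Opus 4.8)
The plan is to unravel the definition of $\mathcal{F}Int$ and split on whether the relevant neighbourhood is empty. Suppose $\mathcal{F}Int(X) \neq \emptyset$ and pick any $v \in \mathcal{F}Int(X)$. By Definition \ref{fint} there is some $G \in \mathcal{F}_{v}$ with $G \subseteq X$ (in fact $G \subseteq X \cap \bigcup \mu$). Since $\mathcal{F}$ maps into $P(P(\bigcup \mu))$, every element of $\mathcal{F}_{v}$ — in particular $G$ — is $\mu$-open, i.e. $G \in \mu$. Now invoke the hypothesis on $X$: every nonempty $A \in \mu$ satisfies $A \nsubseteq X$. Since $G \in \mu$ and $G \subseteq X$, the contrapositive forces $G = \emptyset$.

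Having concluded $G = \emptyset$, I would observe that $G \in \mathcal{F}_{v}$ now reads $\emptyset \in \mathcal{F}_{v}$, which is exactly the condition defining membership of $v$ in the set $Z = \{z \in W; \emptyset \in \mathcal{F}_{z}\}$. Hence $v \in Z$. Since $v$ was an arbitrary element of $\mathcal{F}Int(X)$, we get $\mathcal{F}Int(X) \subseteq Z$. Combining the two cases, either $\mathcal{F}Int(X) = \emptyset$ or $\mathcal{F}Int(X) \subseteq Z$, which is the claim.

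There is essentially no obstacle here: the argument is a direct chase through the definitions, and the only subtlety worth flagging is that the hypothesis ``$A \neq \emptyset \Rightarrow A \nsubseteq X$'' is used in contrapositive form (``$A \in \mu$ and $A \subseteq X$ $\Rightarrow$ $A = \emptyset$''), together with the codomain restriction on $\mathcal{F}$ that guarantees the witnessing neighbourhood $G$ actually lies in $\mu$. I would write the proof in three short lines following exactly this structure: fix $v \in \mathcal{F}Int(X)$ (assuming the interior is nonempty), extract the witness $G$, note $G \in \mu$, apply the hypothesis to get $G = \emptyset$, and read off $v \in Z$.
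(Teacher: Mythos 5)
Your proof is correct and is essentially the paper's argument in direct rather than contrapositive form: both extract the witness $G \in \mathcal{F}_{v}$ with $G \subseteq X$, note $G \in \mu$, and use the hypothesis to force $G = \emptyset$, hence $v \in Z$. One tiny imprecision: the fact that $G \in \mu$ follows from the two bullet conditions in the definition of a \gtf-structure, not from the codomain $P(P(\bigcup \mu))$ alone, which only gives $G \subseteq \bigcup \mu$.
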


\begin{proof}Suppose that $\mathcal{F}Int(X) \neq \emptyset$ and $\mathcal{F}Int \nsubseteq Z$. Hence, there is $v \in \mathcal{F}Int(X)$ such that $\emptyset \notin \mathcal{F}_{v}$. It means that $F_{v}$ contains only non-empty sets. But we assumed that there are no non-empty sets (from $\mu$) contained in $X$. \end{proof}

Note that if there is at least one $X \subseteq W$ such that $\mathcal{F}Int(X) = \emptyset$, then $Z$ (defined as above) must be empty. Assume the contrary. If $z \in Z$, then we can always say that there is $G \in F_{z}$, namely $G = \emptyset$, such that $G \subseteq X$. But then $z \in \mathcal{F}Int(X)$. 



\begin{lem}
\label{sumint}
Suppose that $J \neq \emptyset$ and $\{X_i\}_{i \in J}$ is a family of subsets of $W$. Then $\bigcup_{i \in J} \mathcal{F}Int(X_i) \subseteq \mathcal{F}Int(\bigcup_{i \in J} X_i)$. If each $X_i$ is u$\mathcal{F}$o., then $\bigcup_{i \in J} X_{i} \subseteq \mathcal{F}Int(\bigcup_{i \in J} X_i)$. 
\end{lem}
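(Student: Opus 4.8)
The plan is to prove the first inclusion directly from Definition~\ref{fint} and the monotonicity lemma, and then to obtain the second claim as an immediate corollary of the definition of u$\mathcal{F}$-openness. Neither step should present a genuine obstacle; the only thing to watch is keeping the quantifier bookkeeping straight, since $\mathcal{F}Int$ does not commute with unions in the reverse direction.

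First I would take $v \in \bigcup_{i \in J} \mathcal{F}Int(X_i)$. Then $v \in \mathcal{F}Int(X_{i_0})$ for some $i_0 \in J$, so by Definition~\ref{fint} there is $G \in \mathcal{F}_v$ with $G \subseteq X_{i_0}$. Since $X_{i_0} \subseteq \bigcup_{i \in J} X_i$, the same witness $G$ gives $G \subseteq \bigcup_{i \in J} X_i$, hence $v \in \mathcal{F}Int(\bigcup_{i \in J} X_i)$. (Alternatively, this is just the monotonicity lemma applied to each inclusion $X_{i_0} \subseteq \bigcup_{i\in J} X_i$, followed by taking the union over $i_0$; I would probably phrase it that way to keep the proof short.)

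For the second statement, suppose each $X_i$ is u$\mathcal{F}$o., i.e.\ $X_i \subseteq \mathcal{F}Int(X_i)$ for every $i \in J$. Taking unions, $\bigcup_{i \in J} X_i \subseteq \bigcup_{i \in J} \mathcal{F}Int(X_i)$, and then chaining with the inclusion just proved yields $\bigcup_{i \in J} X_i \subseteq \mathcal{F}Int(\bigcup_{i \in J} X_i)$, which is exactly what is claimed.

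The only subtlety worth a remark is that one should \emph{not} expect equality in the first inclusion, nor should one expect $\bigcup_{i \in J} X_i$ to be u$\mathcal{F}$o.\ in the strong sense that its $\mathcal{F}$-interior equals itself — the conclusion is genuinely only the one-sided containment, because a witness $G \subseteq \bigcup_i X_i$ for a point $v$ need not be contained in any single $X_i$. So the main "obstacle" is really just resisting the temptation to claim more than the lemma states; the argument itself is a two-line unwinding of the definitions.
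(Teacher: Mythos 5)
Your proof is correct and follows essentially the same route as the paper: the same witness-passing argument ($G \subseteq X_{i_0} \subseteq \bigcup_{i\in J} X_i$) for the first inclusion, and the second claim obtained by taking unions over the u$\mathcal{F}$o.\ hypothesis and chaining. The paper leaves the second part implicit, so your explicit treatment of it is, if anything, slightly more complete.
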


\begin{proof}
Let $v \in \bigcup_{i \in J}\mathcal{F}Int(X_i)$. Hence, there is $k \in J$ such that $v \in \mathcal{F}Int(X_{k})$. Then there is $G \in \mathcal{F}_{v}$ such that $G \subseteq X_k$. But then $G \subseteq X_k \subseteq \bigcup_{i \in J} X_i$. Hence, we can say that $v \in \mathcal{F}Int(\bigcup_{i \in J} X_i)$. 
\end{proof}

Note that we can easily imagine the following situation: there is $v \in W$ such that for any $G \in \mathcal{F}_{v}$ and for each $i \in J$, $G \nsubseteq \mathcal{F}Int(X_i)$ but at the same time there is $H \in \mathcal{F}_{v}$ such that $H \subseteq \bigcup_{i \in J} \mathcal{F}Int(X_i)$. Hence, $v \in \mathcal{F}Int(\bigcup_{i \in J}X_i)$ but $v \notin \bigcup_{i \in J} \mathcal{F}Int(X_i)$. Please take a look below:

\begin{przy}
\normalfont{
Let $\langle W, \mu, \mathcal{F} \rangle$ be \gtf where $W = \mathbb{R}^2$ and $\mu$ is a standard (hence, in particular, generalized) topology but with respect to the ball $K[(0,0), 10]$. It means that $\mu$-open sets are unions of open balls contained in $K$, i.e $K = \bigcup \mu$. Now we can take $K_1[(0,0), 2]$, $K_2[(2, 0), 2]$, $v = (20, 20)$ and assume that $\mathcal{F}_{v}$ contains only one set, namely open rectangle based on coordinates $(-3, -1), (-3, 1), (3, 1), (3, -1)$. Clearly, it is contained in $K_1 \cup K_2$ (and in the union of their $\mathcal{F}$-interiors, because these balls are open) but is not contained in $K_1$ nor $K_2$. 
}
\end{przy}

\begin{lem}
Suppose that $J \neq \emptyset$ and $\{X_i\}_{i \in J}$ is a family of subsets of $W_\mu$. Then $\mathcal{F}Int(\bigcap_{i \in J} X_i) \subseteq \bigcap_{i \in J} \mathcal{F}Int(X_i)$. If each $X_i$ is d$\mathcal{F}$o., then $\mathcal{F}Int(\bigcap_{i \in J} X_i) \subseteq \bigcap_{i \in J} X_i$. 
\end{lem}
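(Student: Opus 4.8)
The plan is to derive both inclusions from monotonicity of $\mathcal{F}Int$ (the first lemma of the subsection), exactly mirroring the argument for the union case in Lemma \ref{sumint}. First I would fix an arbitrary index $j \in J$ (legitimate since $J \neq \emptyset$) and observe that $\bigcap_{i \in J} X_i \subseteq X_j$. Applying monotonicity gives $\mathcal{F}Int(\bigcap_{i \in J} X_i) \subseteq \mathcal{F}Int(X_j)$. Since $j$ was arbitrary, the left-hand side is contained in $\mathcal{F}Int(X_j)$ for every $j \in J$, hence in the intersection $\bigcap_{i \in J} \mathcal{F}Int(X_i)$. This settles the first claim.

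Alternatively, and perhaps more in keeping with the style of the preceding proofs, I could argue pointwise: take $v \in \mathcal{F}Int(\bigcap_{i \in J} X_i)$, so there is $G \in \mathcal{F}_v$ with $G \subseteq \bigcap_{i \in J} X_i$; then for each $i \in J$ we have $G \subseteq X_i$, so the same $G$ witnesses $v \in \mathcal{F}Int(X_i)$, and therefore $v \in \bigcap_{i \in J} \mathcal{F}Int(X_i)$. Either route works; I would likely present the pointwise version since it is self-contained.

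For the second claim, I would simply combine the first inclusion with the definition of d$\mathcal{F}$-openness: if each $X_i$ is d$\mathcal{F}$o., then $\mathcal{F}Int(X_i) \subseteq X_i$ for every $i \in J$, whence $\bigcap_{i \in J} \mathcal{F}Int(X_i) \subseteq \bigcap_{i \in J} X_i$. Chaining this with $\mathcal{F}Int(\bigcap_{i \in J} X_i) \subseteq \bigcap_{i \in J} \mathcal{F}Int(X_i)$ yields $\mathcal{F}Int(\bigcap_{i \in J} X_i) \subseteq \bigcap_{i \in J} X_i$, as desired.

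I do not anticipate any real obstacle here: the statement is the natural order-theoretic dual of Lemma \ref{sumint}, and the whole content is monotonicity plus unwinding a definition. The only point worth a word of caution is that, unlike in the union case, there is no counterpart \emph{lower} bound $\bigcap_{i \in J} X_i \subseteq \mathcal{F}Int(\bigcap_{i \in J} X_i)$ even when each $X_i$ is u$\mathcal{F}$o. — the intersection of u$\mathcal{F}$-open sets need not be u$\mathcal{F}$-open — so I would resist the temptation to state a symmetric second half and keep the lemma exactly as phrased.
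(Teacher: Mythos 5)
Your proof is correct and is exactly the argument the paper intends: the paper's own proof just says ``similar to the former one,'' i.e.\ the same pointwise witness argument dual to Lemma~\ref{sumint} that you spell out, and your handling of the d$\mathcal{F}$o.\ case via $\mathcal{F}Int(X_i) \subseteq X_i$ is the expected one. No issues.
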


\begin{proof}The proof is similar to the former one. Also we can find counterexample for the opposite inclusion.\end{proof}

\begin{lem}
\label{uopen}
Assume that $A \subseteq \mathcal{F}Int(A) \subseteq \bigcup \mu$. Then $A \in \mu$.
\end{lem}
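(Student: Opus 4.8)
The plan is to reduce everything to the lemma proved just above, namely that $\mathcal{F}Int(A) \cap \bigcup \mu = Int(A)$ for any $A \subseteq W$. The hypothesis hands us a chain $A \subseteq \mathcal{F}Int(A) \subseteq \bigcup \mu$, and the second inclusion in particular forces $A \subseteq \bigcup \mu$, so that $A = A \cap \bigcup \mu$. Intersecting the chain $A \subseteq \mathcal{F}Int(A)$ with $\bigcup \mu$ then gives $A = A \cap \bigcup \mu \subseteq \mathcal{F}Int(A) \cap \bigcup \mu$, and by the cited lemma the right-hand side is exactly $Int(A)$. Hence $A \subseteq Int(A)$.

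Since $Int(A) \subseteq A$ holds unconditionally (it is the union of all open sets contained in $A$), we conclude $A = Int(A)$. Being a union of $\mu$-open sets, $Int(A)$ belongs to $\mu$ because $\mu$ is closed under arbitrary unions (and $Int(\emptyset) = \emptyset \in \mu$ covers the degenerate case), so $A \in \mu$, as desired.

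Alternatively, and without invoking the previous lemma, one can argue directly: for each $w \in A$ the containment $w \in \mathcal{F}Int(A)$ yields some $G \in \mathcal{F}_w$ with $G \subseteq A$; since $w \in A \subseteq \bigcup \mu$, Definition \ref{gtf} tells us $G \in \mu$ and $w \in G$, so $A = \bigcup_{w \in A} G_w$ exhibits $A$ as a union of $\mu$-open sets. Either route is short; I expect no genuine obstacle, the only mild point being to notice that the hypothesis $\mathcal{F}Int(A) \subseteq \bigcup \mu$ is what legitimizes passing from $\mathcal{F}$-interior back to the ordinary interior (this is exactly where orphaned points would otherwise spoil the conclusion).
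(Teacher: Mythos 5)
Your proposal is correct, and your second, ``direct'' argument is essentially verbatim the paper's own proof: for each $v \in A$ pick $G \in \mathcal{F}_{v}$ with $G \subseteq A$, observe that $v \in \bigcup \mu$ forces $G \in \mu$ and $v \in G$, and conclude $A \subseteq Int(A)$, hence $A = Int(A) \in \mu$. Your primary route through the identity $\mathcal{F}Int(A) \cap \bigcup \mu = Int(A)$ is the same unwinding packaged as an appeal to the preceding lemma, so both versions are fine and neither differs substantively from the paper.
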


\begin{proof}
For any $v \in A$, there is $G \in \mathcal{F}_{v}$ such that $G \subseteq A$. Of course, $G \in \mu$. But $v \in \bigcup \mu$, hence $v \in G$ (from the very definition of $\mathcal{F}$ in \gtf-structure). This means that for any $v \in A$, $v \in Int(A)$. Thus $A \subseteq Int(A)$, so $A$ is open, i.e. $A \in \mu$. 
\end{proof}

As we said, $\mathcal{F}$-open sets do not have all properties of open sets, even in Cs\'{a}sz\'{a}r's sense. Nonetheless, the following theorem should be considered as useful. In fact, it \textit{will} be usefull in our further investigations. 

\begin{tw}
\label{eachopen}
Let $\langle W, \mu, \mathcal{F} \rangle$ be a \gtf-structure and $w \in W$. Then $\mathcal{F}_{w} \neq \emptyset$ $\Leftrightarrow$ there is $\mathcal{F}$o. set $G \subseteq W$ such that $w \in G$. 
\end{tw}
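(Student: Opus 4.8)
The plan is to treat the two implications separately; the direction $(\Leftarrow)$ is a one-line unwinding of definitions, while $(\Rightarrow)$ is where an $\mathcal{F}$-open set around $w$ has to be exhibited explicitly.

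For $(\Leftarrow)$ I would argue directly: if $G$ is $\mathcal{F}$-open and $w\in G$, then $w\in G=\mathcal{F}Int(G)$, so by Definition \ref{fint} there is $H\in\mathcal{F}_w$ with $H\subseteq G$, and in particular $\mathcal{F}_w\neq\emptyset$.

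For $(\Rightarrow)$ the point I would stress first is that a single open neighbourhood of $w$ will in general \emph{not} be $\mathcal{F}$-open — orphaned points associated with it belong to its $\mathcal{F}$-interior but not to it, exactly as noted right after Definition \ref{fint}. So instead of a local witness I would take the global set
\[
D=\{v\in W:\mathcal{F}_v\neq\emptyset\},
\]
which contains $w$ by hypothesis, and check that $D=\mathcal{F}Int(D)$. The inclusion $\mathcal{F}Int(D)\subseteq D$ is immediate, since any $v\in\mathcal{F}Int(D)$ already has some member in $\mathcal{F}_v$. For $D\subseteq\mathcal{F}Int(D)$ I would fix $v\in D$, pick any $H\in\mathcal{F}_v$; by Definition \ref{gtf} we have $H\in\mu$, hence $H\subseteq\bigcup\mu$, so every $u\in H$ satisfies $u\in\bigcup\mu$ together with $u\in H\in\mu$, and the first clause of Definition \ref{gtf} then gives $H\in\mathcal{F}_u$, i.e. $u\in D$. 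Thus $H\subseteq D$, so $H$ witnesses $v\in\mathcal{F}Int(D)$, and $D$ is $\mathcal{F}$-open.

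I do not expect a genuine obstacle here; the only thing to get right is the closure step in $(\Rightarrow)$ — namely the observation that a member of some $\mathcal{F}_v$ is necessarily $\mu$-open and hence consists of non-orphaned points, each of which automatically "sees" that very set in its own associating family. This is precisely what makes the canonical set $D$, rather than any proper open neighbourhood, the right witness.
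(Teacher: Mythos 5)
Your proof is correct, but the forward direction takes a genuinely different route from the paper's. Where you produce a single canonical witness $D=\{v\in W:\mathcal{F}_v\neq\emptyset\}$ (which, note, equals $\mathcal{F}Int(W)$), the paper works locally: it picks an arbitrary $A\in\mathcal{F}_w$ and shows that $G=A\cup\mathcal{F}Int(A)$ is $\mathcal{F}$-open, using the identity $\mathcal{F}Int(A)\cap\bigcup\mu=Int(A)=A$ for $A\in\mu$. Your key closure step is sound: any $H\in\mathcal{F}_v$ lies in $\mu$ by Definition \ref{gtf}, hence $H\subseteq\bigcup\mu$, and every $u\in H$ then has $H\in\mathcal{F}_u$, so $H\subseteq D$; this also covers the degenerate case $H=\emptyset$ vacuously, and the reverse inclusion $\mathcal{F}Int(D)\subseteq D$ is indeed immediate. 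What each approach buys: yours is shorter and exhibits one global $\mathcal{F}$-open set containing \emph{every} point with nonempty associating family at once; the paper's construction is more informative locally, since it shows that every $\mu$-open set $A$ can be enlarged to the $\mathcal{F}$-open set $A\cup\mathcal{F}Int(A)$, giving a whole family of (generally smaller) $\mathcal{F}$-open neighbourhoods of $w$, one for each $A\in\mathcal{F}_w$. The backward direction is the same in both.
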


\begin{proof}

($\Rightarrow$)

Since $\mathcal{F}_{w} \neq \emptyset$, then there is at least one $A \in \mathcal{F}_{w}$. Of course, $w \in \mathcal{F}Int(A)$. If $A = \mathcal{F}Int(A)$, then we can finish our proof. If not, then it means that $\mathcal{F}Int(A) \nsubseteq A$. Let us define $G$ as $A \cup \mathcal{F}Int(A)$. We show that $G$ is open, i.e. that $\mathcal{F}Int(G) = G$. 

($\subseteq$) Let $v \in \mathcal{F}Int(G)$. Hence, $v \in \mathcal{F}Int(A \cup \mathcal{F}Int(A))$. Now there is $U \in \mathcal{F}_{v}$ such that $U \subseteq A \cup \mathcal{F}Int(A)$. In fact, it means that $U \subseteq A$ (because $U \subseteq \bigcup \mu$ and $\mathcal{F}Int(A) \cap \bigcup \mu = Int(A) = A$). Thus $v \in \mathcal{F}Int(A)$. From this we infer that $v \in G$. 

($\supseteq$) Let $v \in G$. Hence, $v \in A$ or $v \in \mathcal{F}Int(A)$. If $v \in A$, then $A \in \mathcal{F}_{v}$ (because $A \in \mu$). Therefore $v \in \mathcal{F}Int(G)$. If $v \in \mathcal{F}Int(A)$, then there is $U \in \mathcal{F}_{v}$ such that $U \subseteq A \subseteq G$. Hence, $v \in \mathcal{F}Int(G)$. 

($\Leftarrow$)

Suppose that $G \subseteq W$ is $\mathcal{F}o.$, $w \in G$ and $\mathcal{F}_{w} = \emptyset$. Of course $\mathcal{F}Int(G) = G$, so $w \in \mathcal{F}Int(G)$. Hence, there is $H \in \mathcal{F}_{w}$ such that $H \subseteq G$. Contradiction. 

\end{proof}

Note that in the case of $\Leftarrow$-direction it is enough to assume that $G$ is $u\mathcal{F}$o.

\subsection{$\mathcal{F}$-closures and $\mathcal{F}$-closed sets}

Any sensible definition of "openess" should be dual to certain understanding of "closeness". We propose the following notion, based on the very well known property of closed (and generalized closed) sets. 

\begin{df}
\label{fclosur}
Let $\langle W, \mu, \mathcal{F} \rangle$ be a \gtf and $w \in W$. Assume that $A \subseteq W$. We say that $w \in \mathcal{F}Cl(A) \Leftrightarrow \text{ for any } G \in \mathcal{F}_{w}, G \cap A \neq \emptyset$.
\end{df}

Now we can define $\mathcal{F}$-closed sets:

\begin{df}
Let $\langle W, \mu, \mathcal{F} \rangle$ be a \gtf and $A \subseteq W$. We say that:

\begin{itemize} 
\item $A$ is $\mathcal{F}$-closed ($\mathcal{F}$c.) if and only if $\mathcal{F}Cl(A) = A$. 
\item d$\mathcal{F}$-closed (d$\mathcal{F}$c.) iff $\mathcal{F}Cl(A) \subseteq A$
\item u$\mathcal{F}$-closed (u$\mathcal{F}$c.) iff $A \subseteq \mathcal{F}Cl(A)$
\end{itemize}
\end{df}

This definition makes sense because it gives us expected dualism:

\begin{tw}
Let $\langle W, \mu, \mathcal{F} \rangle$ be a \gtf. Assume that $A \subseteq W$ is $\mathcal{F}$-open. Then $-A$ is $\mathcal{F}$-closed. 
\end{tw}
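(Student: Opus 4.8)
The plan is to reduce the statement to a single complementation identity between $\mathcal{F}Int$ and $\mathcal{F}Cl$, in direct analogy with the classical fact $Cl(A) = W \setminus Int(W \setminus A)$ recalled in the introduction. Concretely, I would first isolate the auxiliary claim that for every $B \subseteq W$ one has $\mathcal{F}Cl(B) = -\mathcal{F}Int(-B)$. Granting this, the theorem is immediate: if $A$ is $\mathcal{F}$-open, then $A = \mathcal{F}Int(A)$, hence $\mathcal{F}Cl(-A) = -\mathcal{F}Int(-(-A)) = -\mathcal{F}Int(A) = -A$, so $-A$ is $\mathcal{F}$-closed by definition.

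To prove the auxiliary claim I would simply unwind Definitions \ref{fint} and \ref{fclosur} pointwise. Fix $w \in W$. By Definition \ref{fint}, $w \in \mathcal{F}Int(-B)$ iff there is $G \in \mathcal{F}_{w}$ with $G \subseteq -B$, i.e. with $G \cap B = \emptyset$ (here it is harmless that $G \subseteq \bigcup \mu$, since $G \cap B = G \cap (B \cap \bigcup \mu)$ anyway). Negating this, $w \notin \mathcal{F}Int(-B)$ iff for every $G \in \mathcal{F}_{w}$ we have $G \cap B \neq \emptyset$, which is precisely the defining condition for $w \in \mathcal{F}Cl(B)$ in Definition \ref{fclosur}. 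Therefore $w \in \mathcal{F}Cl(B) \Leftrightarrow w \in -\mathcal{F}Int(-B)$, and since $w$ was arbitrary, the set identity follows.

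The only point that calls for a moment's care — and the closest thing here to an obstacle — is the degenerate case $\mathcal{F}_{w} = \emptyset$. In that situation the existential statement ``there is $G \in \mathcal{F}_{w}$ with $G \subseteq -B$'' is false, so $w \notin \mathcal{F}Int(-B)$; simultaneously the universal statement ``for every $G \in \mathcal{F}_{w}$, $G \cap B \neq \emptyset$'' is vacuously true, so $w \in \mathcal{F}Cl(B)$. Hence the equivalence still holds on such points, and the dualization goes through without exception. Beyond checking this boundary behaviour, the whole argument is a routine manipulation of quantifiers; no properties of $\mu$ other than those already built into the definition of a \gtf-structure are used.
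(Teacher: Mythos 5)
Your argument is correct, and at bottom it performs the same computation the paper does: negating the existential condition of Definition \ref{fint} to land on the universal condition of Definition \ref{fclosur}. The difference is in the decomposition. The paper works directly with the specific set $A$: it rewrites $-A = -\mathcal{F}Int(A) = \{z \in W;\ \text{for each } G \in \mathcal{F}_{z},\ G \nsubseteq A\}$ and then checks the two inclusions $\mathcal{F}Cl(-A) \subseteq -A$ and $-A \subseteq \mathcal{F}Cl(-A)$ separately, the latter by contradiction using $\mathcal{F}Int(A) = A$. You instead isolate the general identity $\mathcal{F}Cl(B) = -\mathcal{F}Int(-B)$ for arbitrary $B \subseteq W$ --- a statement the paper never records --- and obtain the theorem as a one-line corollary. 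This buys something genuine: the identity is the exact $\mathcal{F}$-analogue of the classical $Cl(A) = W \setminus Int(W \setminus A)$ recalled in the preliminaries; it dualizes the union/intersection lemmas for $\mathcal{F}Int$ into those for $\mathcal{F}Cl$ without separate proofs; and it yields the converse for free, since $\mathcal{F}Cl(-A) = -A$ forces $-\mathcal{F}Int(A) = -A$ and hence $\mathcal{F}Int(A) = A$, so $A$ is $\mathcal{F}$-open if and only if $-A$ is $\mathcal{F}$-closed --- a strengthening the paper does not state. Your two boundary checks are also exactly the right ones: the vacuous case $\mathcal{F}_{w} = \emptyset$ falls on the correct side of the equivalence ($w \notin \mathcal{F}Int(-B)$ and $w \in \mathcal{F}Cl(B)$ simultaneously), and the implicit restriction $G \subseteq \bigcup \mu$ in Definition \ref{fint} is harmless because every $G \in \mathcal{F}_{w}$ already lies in $P(\bigcup \mu)$, so $G \subseteq -B$ and $G \cap B = \emptyset$ remain interchangeable.
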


\begin{proof}
We know that $\mathcal{F}Int(A) = \{z \in W; \text{ there is } G \in \mathcal{F}_{z} \text{ such that } G \subseteq A\} = A$. Let us consider $-A = \{z \in W; \text{ for each } G \in \mathcal{F}_{z}, G \nsubseteq A\}$. We shall show that $\mathcal{F}Cl(-A) = -A$. 

($\subseteq$) Assume that $w \in \mathcal{F}Cl(-A)$. Hence, for any $G \in \mathcal{F}_{w}$, $G \cap -A \neq \emptyset$. Now $G \nsubseteq A$ and for this reason $w \in -A$. 

($\supseteq$) Suppose that $w \in -A$ and assume that there is $H \in \mathcal{F}_{w}$ such that $H \cap -A = \emptyset$. It means that $H \subseteq A$. But then $w \in \mathcal{F}Int(A) = A$ which gives us plain contradiction. 
\end{proof}

As in the case of interiors, properties of $\mathcal{F}Cl$ are rather weak (when compared to the properties of closures and generalized closures). For example, we may ask if $A \subseteq \mathcal{F}Cl(A)$. The answer is (in general) negative. We may easily imagine the following situation: $A \subseteq W$, $A \cap (W \setminus \bigcup \mu) \neq \emptyset$, $w \in A \cap (W \setminus \bigcup \mu)$ and there is $G \in \mathcal{F}_{w}$ such that $G \cap A = \emptyset$. 

On the other hand, it is possible that $\mathcal{F}Cl(A) \nsubseteq A$. It is even easier to imagine that for any $G \in \mathcal{F}_{w}$, $G \cap A \neq \emptyset$ but at the same time $w \notin A$ (we may assume that $w \in W \setminus \bigcup \mu$ and $A \subseteq \bigcup \mu$). 

We have the following lemmas (with respect to an arbitrary $\langle W, \mu, \mathcal{F} \rangle$):

\begin{lem}
$\mathcal{F}Cl(\emptyset) = \emptyset \Leftrightarrow \text{ for any } z \in W, \mathcal{F}_{z} \neq \emptyset$
\end{lem}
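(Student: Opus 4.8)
The statement to prove is $\mathcal{F}Cl(\emptyset) = \emptyset \Leftrightarrow$ for any $z \in W$, $\mathcal{F}_{z} \neq \emptyset$. I would prove this directly from Definition \ref{fclosur}, which says $w \in \mathcal{F}Cl(\emptyset)$ iff for every $G \in \mathcal{F}_{w}$ we have $G \cap \emptyset \neq \emptyset$. The key observation is that $G \cap \emptyset = \emptyset$ always, so the condition ``for every $G \in \mathcal{F}_{w}$, $G \cap \emptyset \neq \emptyset$'' is a universally quantified statement over $\mathcal{F}_{w}$ whose body is always false; hence it holds precisely when $\mathcal{F}_{w}$ is empty (vacuous quantification). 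This is the whole content of the lemma, so the two directions are just two readings of that equivalence.

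For the ($\Rightarrow$) direction I would argue by contraposition: suppose there is some $v \in W$ with $\mathcal{F}_{v} = \emptyset$. Then the condition defining membership in $\mathcal{F}Cl(\emptyset)$ is satisfied vacuously at $v$ (there is no $G \in \mathcal{F}_{v}$ to violate it), so $v \in \mathcal{F}Cl(\emptyset)$, contradicting $\mathcal{F}Cl(\emptyset) = \emptyset$. For the ($\Leftarrow$) direction, assume $\mathcal{F}_{z} \neq \emptyset$ for every $z \in W$, and suppose toward a contradiction that some $w \in \mathcal{F}Cl(\emptyset)$. Then for every $G \in \mathcal{F}_{w}$ we would need $G \cap \emptyset \neq \emptyset$; but $\mathcal{F}_{w}$ is non-empty, so pick any such $G$ and obtain $\emptyset = G \cap \emptyset \neq \emptyset$, a contradiction. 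Hence $\mathcal{F}Cl(\emptyset) = \emptyset$.

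There is no real obstacle here — the only thing to be careful about is the handling of the vacuous quantifier, making sure the reader sees that an empty $\mathcal{F}_{w}$ forces $w$ into $\mathcal{F}Cl(\emptyset)$ rather than out of it. This mirrors exactly the earlier lemma characterizing $\mathcal{F}Int(W) = W$, and indeed one could even note in passing that the present lemma is the dual of that one via the observation (implicit in the duality theorem above) that $\mathcal{F}Cl(A) = -\mathcal{F}Int(-A)$ applied to $A = \emptyset$. I would keep the proof short, in the same terse style as the surrounding lemmas.

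\begin{proof}
($\Rightarrow$) Suppose there is $v \in W$ such that $\mathcal{F}_{v} = \emptyset$. Then the condition ``for any $G \in \mathcal{F}_{v}$, $G \cap \emptyset \neq \emptyset$'' holds vacuously, so $v \in \mathcal{F}Cl(\emptyset)$. This contradicts $\mathcal{F}Cl(\emptyset) = \emptyset$.

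($\Leftarrow$) Assume that $\mathcal{F}_{z} \neq \emptyset$ for every $z \in W$, and suppose that $\mathcal{F}Cl(\emptyset) \neq \emptyset$. Then there is $w \in W$ such that for any $G \in \mathcal{F}_{w}$, $G \cap \emptyset \neq \emptyset$. But $\mathcal{F}_{w} \neq \emptyset$, so we may pick some $G \in \mathcal{F}_{w}$, and then $\emptyset = G \cap \emptyset \neq \emptyset$, which is absurd. Hence $\mathcal{F}Cl(\emptyset) = \emptyset$.
\end{proof}
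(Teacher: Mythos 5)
Your proof is correct and follows essentially the same route as the paper's: both directions hinge on the vacuous-quantification observation that $w \in \mathcal{F}Cl(\emptyset)$ holds precisely when $\mathcal{F}_{w}$ is empty, and both arguments are carried out by the same contrapositive/contradiction steps. No gaps.
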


\begin{proof}
Note that $\mathcal{F}Cl(\emptyset) = \{z \in W; G \cap \emptyset \neq \emptyset \text{ for each } G \in \mathcal{F}_{z}\}$. 

($\Rightarrow$) If we assume that there is $z \in W$ such that $\mathcal{F}_{z} = \emptyset$, then we can say anything about an arbirtrary "$G \in \mathcal{F}_{z}$". In particular, we can say that "such $G$" has non-empty intersection with $\emptyset$. Hence, $z \in \mathcal{F}Cl(\emptyset)$ and $\mathcal{F}Cl(\emptyset) \neq \emptyset$.

($\Leftarrow$) If $\mathcal{F}Cl(\emptyset)$ is not empty, then there must be at least one $z \in \mathcal{F}Cl(\emptyset)$. Now if we assume that $\mathcal{F}_{z} \neq \emptyset$ (i.e. there is certain $G \in \mathcal{F}_{z}$), then it means that $\emptyset$ forms non-empty intersection with $G$. This is not possible.

\end{proof}

\begin{lem}
$\mathcal{F}Cl(W) = W \Leftrightarrow \text{ for any } z \in W, \emptyset \notin \mathcal{F}_{z}$. 
\end{lem}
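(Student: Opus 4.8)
The plan is to unwind the definition of $\mathcal{F}Cl$ applied to the full space $W$ and observe that the intersection condition collapses to a non-emptiness condition. Indeed, by Definition \ref{fclosur}, $z \in \mathcal{F}Cl(W)$ iff for every $G \in \mathcal{F}_z$ we have $G \cap W \neq \emptyset$; since every $G \in \mathcal{F}_z$ satisfies $G \subseteq \bigcup \mu \subseteq W$, the set $G \cap W$ is just $G$. So $z \in \mathcal{F}Cl(W)$ iff every member of $\mathcal{F}_z$ is non-empty, i.e. iff $\emptyset \notin \mathcal{F}_z$. Once this reformulation is in place, both implications are immediate.

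First I would handle ($\Rightarrow$) by contraposition: if there is some $z \in W$ with $\emptyset \in \mathcal{F}_z$, then taking $G = \emptyset \in \mathcal{F}_z$ we get $G \cap W = \emptyset$, so $z \notin \mathcal{F}Cl(W)$, whence $\mathcal{F}Cl(W) \neq W$. Then for ($\Leftarrow$) I would take an arbitrary $w \in W$ and any $G \in \mathcal{F}_w$; by hypothesis $G \neq \emptyset$, and since $G \subseteq W$ this gives $G \cap W = G \neq \emptyset$, so $w \in \mathcal{F}Cl(W)$. As $w$ was arbitrary and $\mathcal{F}Cl(W) \subseteq W$ trivially, we conclude $\mathcal{F}Cl(W) = W$.

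There is essentially no obstacle here; the proof is a direct consequence of the definitions, entirely parallel (and dual) to the preceding lemma about $\mathcal{F}Cl(\emptyset)$. The only point worth stating explicitly is the observation that $G \cap W = G$ for every $G \in \mathcal{F}_w$, which is what makes the equivalence work. I would keep the write-up to a few lines, mirroring the style of the surrounding lemmas.
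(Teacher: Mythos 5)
Your proposal is correct and follows essentially the same route as the paper: unwind Definition \ref{fclosur} at $A = W$, observe that $G \cap W = G$ for every $G \in \mathcal{F}_z$, and read off the equivalence with $\emptyset \notin \mathcal{F}_z$. If anything, your write-up is slightly more complete, since you genuinely prove the ($\Leftarrow$) direction (every $w$ lands in $\mathcal{F}Cl(W)$ because each $G \in \mathcal{F}_w$ is non-empty), whereas the paper's ($\Leftarrow$) paragraph in fact only restates the contrapositive of ($\Rightarrow$).
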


\begin{proof}
Note that $\mathcal{F}Cl(W) = \{z \in W; G \cap W \neq \emptyset \text{ for each } G \in \mathcal{F}_{z}\}$. 

($\Rightarrow$) If the set in question set is equal to $W$, then it is impossible that there exists $z \in W$ such that $\emptyset \in \mathcal{F}_{z}$. It would mean that $\emptyset \cap W \neq \emptyset$. 

($\Leftarrow$) On the other hand, if there is $z \in W$ such that $\emptyset \in \mathcal{F}_{z}$, then of course $\emptyset \cap W = \emptyset$. Hence, $z \notin \mathcal{F}Cl(W)$. 
\end{proof}

\begin{lem}
Suppose that $J \neq \emptyset$ and $\{X_i\}_{i \in J}$ is a family of subsets of $W$. Then $\bigcup_{i \in J} \mathcal{F}Cl(X_i) \subseteq \mathcal{F}Cl(\bigcup_{i \in J} X_i)$. If each $X_i$ is u$\mathcal{F}$c., then $\bigcup_{i \in J} X_{i} \subseteq \mathcal{F}Cl(\bigcup_{i \in J} X_i)$. 
\end{lem}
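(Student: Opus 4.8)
The plan is to mirror, almost verbatim, the proof of Lemma~\ref{sumint}, exploiting the fact that the condition defining $\mathcal{F}Cl$ is monotone in the set argument. First I would take an arbitrary $v \in \bigcup_{i \in J} \mathcal{F}Cl(X_i)$ and fix an index $k \in J$ with $v \in \mathcal{F}Cl(X_k)$. By Definition~\ref{fclosur} this means that every $G \in \mathcal{F}_{v}$ satisfies $G \cap X_k \neq \emptyset$. Since $X_k \subseteq \bigcup_{i \in J} X_i$, we have $G \cap X_k \subseteq G \cap \bigcup_{i \in J} X_i$, so the latter intersection is non-empty as well. As $G$ was arbitrary in $\mathcal{F}_{v}$, we conclude $v \in \mathcal{F}Cl(\bigcup_{i \in J} X_i)$, which gives the first inclusion.

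For the second assertion I would just combine the hypothesis with the inclusion just established. If each $X_i$ is u$\mathcal{F}$c., then $X_i \subseteq \mathcal{F}Cl(X_i)$ for every $i \in J$, hence $\bigcup_{i \in J} X_i \subseteq \bigcup_{i \in J} \mathcal{F}Cl(X_i)$; chaining this with $\bigcup_{i \in J} \mathcal{F}Cl(X_i) \subseteq \mathcal{F}Cl(\bigcup_{i \in J} X_i)$ from the first part finishes the argument.

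I do not expect any genuine obstacle here; the computation is entirely routine. The only point worth a brief remark is the degenerate case in which some $\mathcal{F}_{v} = \emptyset$: then $v$ belongs vacuously both to $\mathcal{F}Cl(X_k)$ and to $\mathcal{F}Cl(\bigcup_{i \in J} X_i)$, so the reasoning is unaffected. As with Lemma~\ref{sumint}, the reverse inclusion $\mathcal{F}Cl(\bigcup_{i \in J} X_i) \subseteq \bigcup_{i \in J} \mathcal{F}Cl(X_i)$ fails in general, and one can record a counterexample of the expected shape: a point $v$ with $\mathcal{F}_{v} = \{G_1, G_2\}$, two sets $X_1, X_2$ such that $G_1 \cap X_1 = \emptyset$ but $G_1 \cap X_2 \neq \emptyset$ and $G_2 \cap X_2 = \emptyset$ but $G_2 \cap X_1 \neq \emptyset$, so that $v \in \mathcal{F}Cl(X_1 \cup X_2)$ while $v \notin \mathcal{F}Cl(X_1) \cup \mathcal{F}Cl(X_2)$.
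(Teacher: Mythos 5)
Your proof is correct and follows essentially the same route as the paper's: pick $v$ in some $\mathcal{F}Cl(X_k)$, use monotonicity of the intersection condition via $X_k \subseteq \bigcup_{i \in J} X_i$, and chain the u$\mathcal{F}$c.\ hypothesis with the first inclusion for the second claim. Your added remarks on the vacuous case and the shape of the counterexample to the reverse inclusion match the paper's surrounding discussion as well.
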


\begin{proof}
Let $v \in \bigcup_{i \in J} \mathcal{F}Cl(A)$. It means that there is $k \in J$ such that $v \in \mathcal{F}Cl(X_{k})$. Hence, for any $G \in \mathcal{F}_{v}$, $G \cap X_{k} \neq \emptyset$. Clearly, $X_{k} \subseteq \bigcup_{i \in J} X_i$. Thus $G \cap \bigcup_{i \in J} X_i \neq \emptyset$. For this reason, $v \in \mathcal{F}Cl(\bigcup_{i \in J} X_i)$. 
\end{proof}

As in the case of $\mathcal{F}$-interiors, we can find counter-example for the opposite inclusion.

\begin{przy}
\normalfont{
Let us go back to the \gtf used after the Lemma \ref{sumint} but now assume that $\mathcal{F}_{v}$ contains only two sets, namely open balls: $L_1[(-1, 0), 0.5]$ and $L_2[(3, 0), 0.5]$. Clearly, each of them has non-empty intersection with $K_1 \cup K_2$ (in fact, they are contained in this union), hence $v \in \mathcal{F}Cl(K_1 \cup K_2)$. On the other hand, $v$ is not in $\mathcal{F}Cl(K_1)$ (because $L_2 \cap K_1 = \emptyset$) and is not in $\mathcal{F}Cl(K_2)$ (because $L_1 \cap K_2 = \emptyset$).
}
\end{przy}

The next lemma is about intersections:

\begin{lem}
Suppose that $J \neq \emptyset$ and $\{X_i\}_{i \in J}$ is a family of subsets of $W_\mu$. Then $\mathcal{F}Cl(\bigcap_{i \in J} X_i) \subseteq \bigcap_{i \in J} \mathcal{F}Cl(X_i)$. If each $X_i$ is d$\mathcal{F}$c., then $\mathcal{F}Cl(\bigcap_{i \in J} X_i) \subseteq \bigcap_{i \in J} X_i$. 
\end{lem}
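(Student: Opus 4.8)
The plan is to mimic the structure of the preceding lemma on unions, exploiting the fact that $\mathcal{F}Cl$ is monotone with respect to inclusion (which follows immediately from Definition~\ref{fclosur}: if $A \subseteq B$ and $w \in \mathcal{F}Cl(A)$, then every $G \in \mathcal{F}_w$ meets $A$, hence meets $B$, so $w \in \mathcal{F}Cl(B)$). First I would establish this monotonicity as a one-line observation. Then, for the first inclusion, fix $v \in \mathcal{F}Cl(\bigcap_{i \in J} X_i)$; for each fixed $k \in J$ we have $\bigcap_{i \in J} X_i \subseteq X_k$, so by monotonicity $v \in \mathcal{F}Cl(X_k)$. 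Since $k$ was arbitrary, $v \in \bigcap_{i \in J} \mathcal{F}Cl(X_i)$, which gives $\mathcal{F}Cl(\bigcap_{i \in J} X_i) \subseteq \bigcap_{i \in J} \mathcal{F}Cl(X_i)$.

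For the second part, suppose additionally that each $X_i$ is d$\mathcal{F}$c., i.e. $\mathcal{F}Cl(X_i) \subseteq X_i$ for every $i \in J$. Combining this with the inclusion just proved, $\mathcal{F}Cl(\bigcap_{i \in J} X_i) \subseteq \bigcap_{i \in J} \mathcal{F}Cl(X_i) \subseteq \bigcap_{i \in J} X_i$, which is exactly the claimed conclusion. So the second statement is a formal consequence of the first together with the hypothesis, requiring no new argument.

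I do not expect any genuine obstacle here: the entire proof is a routine application of monotonicity of $\mathcal{F}Cl$, parallel to the union lemma (where the corresponding inclusions run the other way because union is a supremum rather than an infimum). The only point worth a moment's care is that, unlike the union case, one cannot extract a single "witnessing index" — instead one must argue for \emph{every} $k \in J$ that $v \in \mathcal{F}Cl(X_k)$, which is why the argument runs through monotonicity rather than through picking some $k$ with a special property. As with the earlier lemmas, the reverse inclusion fails in general, and a counterexample can be produced in the same spirit as the one following Lemma~\ref{sumint} (taking two sets $X_1, X_2$ whose intersection is small while $\mathcal{F}_v$ contains a set meeting each $X_i$ but not contained in $X_1 \cap X_2$). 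Given the paper's style of occasionally deferring such counterexamples, I would simply remark that the opposite inclusion need not hold.
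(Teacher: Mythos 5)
Your proposal is correct and follows essentially the same argument as the paper: the paper directly unwinds the definition (every $G \in \mathcal{F}_v$ meeting $\bigcap_{i \in J} X_i$ must meet each $X_k$), which is exactly your monotonicity observation applied to $\bigcap_{i \in J} X_i \subseteq X_k$, and the second claim follows formally just as you say. No gaps.
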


\begin{proof}
Let $v \in \mathcal{F}Cl(\bigcap_{i \in J}X_i)$. It means that for any $G \in \mathcal{F}_{v}$, $G \cap \bigcap_{i \in J} X_i \neq \emptyset$. Hence, for any $k \in J$, $G \cap X_k \neq \emptyset$. Thus $v \in \mathcal{F}Cl(X_k)$. Finally, $v \in \bigcap_{i \in J} \mathcal{F}Cl(X_i)$. 
\end{proof}

As earlier, the converse is not true (in general). Finally, we should prove quite simple but important lemma:

\begin{lem}
\label{close}
Assume that $\langle W, \mu, \mathcal{F} \rangle$ is a \gt-frame and $A \subseteq W$. Then $\mathcal{F}Cl(A) \subseteq Cl(A)$. 
\end{lem}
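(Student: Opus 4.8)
The plan is to prove the inclusion in its contrapositive form: I will show that if $w \notin Cl(A)$, then $w \notin \mathcal{F}Cl(A)$. First I would invoke the identity $Cl(A) = W \setminus Int(W \setminus A)$ recorded in the preliminaries, so that $w \notin Cl(A)$ means exactly $w \in Int(W \setminus A)$. By the definition of $Int$ in a generalized topological space, there is then some $G \in \mu$ with $w \in G \subseteq W \setminus A$.

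The second step is to feed this $G$ into the definition of $\mathcal{F}$. Since $w \in G$ and $G \in \mu$, we have $w \in \bigcup \mu$, so $w$ is not orphaned and the first clause of Definition \ref{gtf} applies: every $\mu$-open set containing $w$ is a member of $\mathcal{F}_{w}$, and in particular $G \in \mathcal{F}_{w}$. But $G \subseteq W \setminus A$ gives $G \cap A = \emptyset$, so $\mathcal{F}_{w}$ contains a set disjoint from $A$; by Definition \ref{fclosur} this is precisely the statement that $w \notin \mathcal{F}Cl(A)$. Hence $\mathcal{F}Cl(A) \subseteq Cl(A)$.

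There is essentially no obstacle in this argument; the only points that require a moment's care are that one must use the \emph{forcing} clause of Definition \ref{gtf} (the one for $w \in \bigcup \mu$, which guarantees $G \in \mathcal{F}_{w}$, not merely permits it), and that the degenerate case $\mathcal{F}_{w} = \emptyset$ — which would vacuously place $w$ in $\mathcal{F}Cl(A)$ — cannot arise under the hypothesis $w \in Int(W \setminus A)$, since such a $w$ already has $G$ itself as an element of $\mathcal{F}_{w}$. An equivalent direct route is also available: take $w \in \mathcal{F}Cl(A)$ and any $\mu$-open $G$ with $w \in G$; then $G \in \mathcal{F}_{w}$, so $G \cap A \neq \emptyset$, meaning $w$ lies in no $\mu$-open subset of $W \setminus A$, i.e. $w \notin Int(W \setminus A) = W \setminus Cl(A)$.
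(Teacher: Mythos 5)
Your proof is correct and is essentially the paper's argument read contrapositively: the paper takes $w \in \mathcal{F}Cl(A)$ and splits into the cases $w \in \bigcup \mu$ (where $\mathcal{F}_{w}$ is exactly the family of ordinary open neighbourhoods, so every such neighbourhood meets $A$ and $w \in Cl(A)$) and $w$ orphaned (where the claim holds vacuously), while your contrapositive folds both cases into one because $w \in Int(W \setminus A)$ already forces $w \in \bigcup \mu$. The ``direct route'' you sketch at the end is precisely the paper's first case, so there is nothing substantively different here.
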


\begin{proof}
Let us assume that $w \in \mathcal{F}Cl(A)$. Hence, for any $G \in \mathcal{F}_{w}$, $G \cap A \neq \emptyset$. If $w \in \bigcup \mu$, then the result is obvious: each $G \in \mathcal{F}_{w}$ is just an ordinary generalized topological neighbourhood. Hence, $w \in Cl(A)$. If $w \in W \setminus \bigcup \mu$, then there are no any ordinary neighbourhoods of $w$. Thus, our conclusion is trivially true. 
\end{proof}

\begin{figure}[h]
\centering
\includegraphics[height=5.5cm]{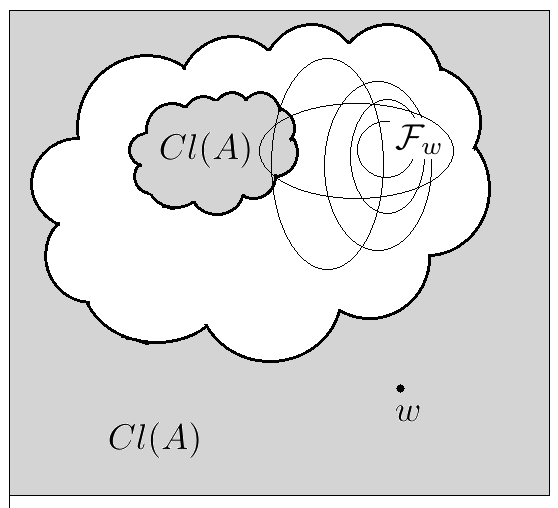}
\caption{$w \notin \mathcal{F}Cl(A)$ but $w \in Cl(A)$}
\label{fig:obrazek {asso1}}
\end{figure}

Of course, the converse is not true. We shall not present detailed counterexample, just a scheme based on a Fig. 1. Clearly, $w \in W \setminus Int(W \setminus A) = Cl(A)$ but at the same time there are some sets in $\mathcal{F}_{w}$ which have empty intersection with $A$. 

\section{Generalized nets and sequences}

In this section we adhere mostly to the notions introduced and investigated by Baskaran in \cite{baskar} and \cite{seq}. Of course, they are placed in our specific environment. Moreover, we have developed some new definitions and ideas. We have been inspired also by Dev Sarma (see \cite{sarma}) and Palaniammal, Murugalingam (see \cite{filters}). 

In the presence of $\mathcal{F}$ we can introduce various definitions of convergence, limit and limit point. The first definition refers to the notion of generalized net and is based on the Baskaran's one:

\begin{df}
Let $\langle W, \mu, \mathcal{F} \rangle$ be a \gtf and $(P, \geq)$ be a poset. A generalized net (gnet) in $W$ is a function $f: P \rightarrow W$. The image of $\lambda \in P$ under $f$ is denoted by $f_\lambda$ and the whole gnet is denoted as $(f_\lambda)$.
\end{df}

When there is no risk of confusion (e.g. when $P$ may be arbitrary or when we are working only with one gnet), we shall not always write directly that $f$ is a function from $P$ into $W$. It will be known from the context that $P$ in question is connected to the given $(f_\lambda)$ and \emph{vice-versa}.

What may be surprising, is the fact that generalized net has \textit{pre-ordered} domain and not necessarily directed (this was assumed both by \cite{baskar} and \cite{sarma}). For this reason we can introduce also two other notions:

\begin{df}
Let $\langle W, \mu, \mathcal{F} \rangle$ be a \gtf and $(P, \geq)$ be a poset. We say that gnet $(f_\lambda)$, $f: P \rightarrow W$, is net if and only if $P$ is directed, i.e. for any two elements $\lambda_1, \lambda_2 \in P$ there is $\lambda_3 \in P$ such that $\lambda_1 \leq \lambda_3$ and $\lambda_2 \leq \lambda_3$. If $P = \mathbb{N}$, then we say that $(f_\lambda)$ is a sequence.
\end{df}

Now we go to the convergence, using $\mathcal{F}$ directly:

\begin{df}
Let $\langle W, \mu, \mathcal{F} \rangle$ be a \gtf and $(f_\lambda)$ be a gnet in $W$. We say that:

\begin{itemize}
\item $(f_\lambda)$ is eventually in $U \subseteq W$ iff there is $\lambda_0 \in P$ such that for any $\lambda \geq \lambda_0$, $f_\lambda \in U$. 

\item $(f_\lambda)$ converges to $w \in W$ (i.e. $(f_\lambda) \rightarrow w$) iff for any $G \in \mathcal{F}_{w}$, $f_\lambda$ is eventually in $G$. In this case we say that $w$ is a limit of $(f_\lambda)$. We say that $(f_\lambda)$ is convergent if there is $v \in W$ such that $(f_\lambda) \rightarrow v$. 

\item $(f_\lambda)$ is frequently in $U$ iff for any $\lambda \in P$ there is $\lambda_1 \in P$ such that $\lambda_1 \geq \lambda$, we have $f_{\lambda_1} \in U$. We say that $w$ is a limit point of $(f_\lambda)$ if it is frequently in every $G \in \mathcal{F}_{w}$. 
\end{itemize}
\end{df}

Sometimes, when it is useful for clarity, we shall name this kind of convergence as $\mathcal{F}$-convergence or $\rightarrow$-convergence. Contrary to the result for \gt (without $\mathcal{F}$), in our environment constant gnet may not be convergent. Let us formulate the following lemma and theorem:

\begin{lem}
Let $\langle W, \mu, \mathcal{F} \rangle$ be a \gtf and $(f_\lambda) = (w)$ be a constant gnet in $W$. Then $(w)$ is convergent $\Leftrightarrow$ $(w) \rightarrow w$. 
\end{lem}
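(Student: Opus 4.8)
The plan is to prove both implications of the biconditional, noting that the right-to-left direction is immediate and the left-to-right direction is where the content lies. For ($\Leftarrow$), if we already know that $(w) \rightarrow w$, then by definition there is $v \in W$ (namely $v = w$) with $(w) \rightarrow v$, so the constant gnet is convergent; there is nothing more to check.

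For ($\Rightarrow$), assume $(w)$ is convergent, so there is some $v \in W$ with $(w) \rightarrow v$. I want to deduce $(w) \rightarrow w$, i.e.\ that for every $G \in \mathcal{F}_{w}$ the gnet $(w)$ is eventually in $G$. Since $(w)$ is constant, being eventually in $G$ is the same as $w \in G$. So the goal reduces to showing: every $G \in \mathcal{F}_{w}$ contains $w$. The key step is to distinguish two cases according to whether $w \in \bigcup \mu$ or $w \in W \setminus \bigcup \mu$, using Definition~\ref{gtf}. If $w \in \bigcup \mu$, then by the first clause of Definition~\ref{gtf}, $G \in \mathcal{F}_{w}$ implies $w \in G$, which is exactly what we need; the hypothesis of convergence is not even required here. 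If $w \in W \setminus \bigcup \mu$, then no $G \in \mathcal{F}_{w}$ contains $w$ (orphaned points lie outside all their neighbourhoods), so the only way $(w) \rightarrow w$ can hold is if $\mathcal{F}_{w} = \emptyset$; hence in this case I must use the convergence hypothesis to force $\mathcal{F}_{w} = \emptyset$.

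So in the orphaned case, suppose toward a contradiction that $\mathcal{F}_{w} \neq \emptyset$, and pick some $H \in \mathcal{F}_{w}$. Since $(w) \rightarrow v$, the constant gnet $(w)$ is eventually in every $G \in \mathcal{F}_{v}$, hence $w$ belongs to every element of $\mathcal{F}_{v}$. But also the constant gnet $(w)$ is eventually in $H$ would be required if we had $(w)\rightarrow w$; the real obstruction is different. Let me re-examine: convergence to $v$ tells us $w \in G$ for all $G \in \mathcal{F}_{v}$, but says nothing directly about $\mathcal{F}_{w}$. The subtle point — and the main obstacle — is that for an orphaned $w$ with $\mathcal{F}_{w} \neq \emptyset$, the constant gnet $(w)$ genuinely fails to converge to $w$, yet it might still converge to some $v \in \bigcup \mu$. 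So the statement as phrased would be \emph{false} unless one rereads it as asserting the equivalence only under the standing reading that "$(w)$ convergent" is being tested against the candidate limit $w$. The correct reading: I expect the intended proof simply observes that if $w \in \bigcup\mu$ the equivalence is trivial (both sides hold), and if $w$ is orphaned then $(w) \rightarrow w$ iff $\mathcal{F}_w = \emptyset$ iff $(w)$ is convergent in the degenerate way — and here the hard part is checking that an orphaned $w$ with nonempty $\mathcal{F}_w$ makes $(w)$ non-convergent, which requires arguing that no $v$ can serve as a limit; I would handle this by the remark that any limit $v$ must have $w$ in all its open neighbourhoods, place $w$ in $\bigcup\mu$, contradiction, so indeed no limit exists. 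That case analysis, together with locating exactly where the $\mathcal{F}$-axioms are invoked, is the crux.
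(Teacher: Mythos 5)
Your $(\Leftarrow)$ direction and your handling of the case $w \in \bigcup\mu$ are fine and agree with the paper, whose proof of $(\Rightarrow)$ is essentially the argument you settle on at the end: a limit $v$ would have $w$ in every member of $\mathcal{F}_v$, which forces $w \in \bigcup\mu$, contradicting that $w$ is orphaned. But the doubt you raise in the middle of your write-up is the right instinct, and the resolution you (and, as it happens, the paper) adopt does not close it. The step ``$w$ lies in every open neighbourhood of $v$, hence $w \in \bigcup\mu$'' is only valid when $\mathcal{F}_v \neq \emptyset$; if $\mathcal{F}_v = \emptyset$, the condition ``for every $G \in \mathcal{F}_v$, $(w)$ is eventually in $G$'' holds vacuously, so $(w) \rightarrow v$ for free and nothing about $w$ follows. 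Concretely, take $W = \{a, w, v\}$, $\mu = \{\emptyset, \{a\}\}$, $\mathcal{F}_a = \{\{a\}\}$, $\mathcal{F}_w = \{\{a\}\}$, $\mathcal{F}_v = \emptyset$. Then the constant gnet $(w)$ converges (vacuously) to $v$, so it is convergent, yet $(w) \nrightarrow w$ because $w \notin \{a\}$ and $\{a\} \in \mathcal{F}_w$. So the equivalence as stated fails, and the missing idea in your proof is precisely the treatment of candidate limits $v$ with $\mathcal{F}_v = \emptyset$; your worry that $(w)$ ``might still converge to some $v \in \bigcup\mu$'' points at the wrong culprit, since for $v \in \bigcup\mu$ the family $\mathcal{F}_v$ is automatically nonempty and the paper's inference does go through.

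If you want a correct statement along these lines you must either exclude such points (e.g.\ assume $\mathcal{F}_z \neq \emptyset$ for all $z \in W$, or quantify only over limits $v$ with $\mathcal{F}_v \neq \emptyset$) or weaken the conclusion. Your case analysis ($w \in \bigcup\mu$ versus $w$ orphaned, and, within the orphaned case, whether $\mathcal{F}_w$ is empty) is the right skeleton for either repair; the only defective joint is the claim that no $v$ can serve as a limit when $w$ is orphaned and $\mathcal{F}_w \neq \emptyset$.
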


\begin{proof}
($\Leftarrow$) This is obvious.

($\Rightarrow$) Assume the contrary, i.e. that there is $v \in W, v \neq w$ such that $(w) \rightarrow v$ but $(w) \nrightarrow w$. Hence, for any $G \in \mathcal{F}_{v}$, $w \in G$ (note that we speak about \emph{constant} gnet) but there still is $H \in \mathcal{F}_{w}$ such that $w \notin H$. But if $w$ is in each open neighbourhood of $v$, then $w$ must be in $\bigcup \mu$. Then for any $G \in \mathcal{F}_{w}$, $w \in G$, hence the existence of $H$ is not possible.

\end{proof}

\begin{tw}
Let $\langle W, \mu, \mathcal{F} \rangle$ be a \gtf and $(f_\lambda) = (w)$ be a constant gnet in $W$. Then $(f_\lambda)$ is convergent $\Leftrightarrow$ $w \in \bigcup \mu$ or $\mathcal{F}_{w} = \emptyset$. 
\end{tw}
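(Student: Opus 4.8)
The plan is to reduce everything to the preceding lemma, which tells us that for a constant gnet $(f_\lambda)=(w)$ we have: $(w)$ is convergent $\Leftrightarrow$ $(w)\to w$. So it suffices to prove $(w)\to w \Leftrightarrow w\in\bigcup\mu$ or $\mathcal{F}_{w}=\emptyset$. First I would unpack the definition of $\to$ for a constant gnet over a (nonempty) poset $P$: since $f_\lambda=w$ for all $\lambda$, the gnet is eventually in a set $G$ precisely when $w\in G$. Hence $(w)\to w$ holds if and only if $w\in G$ for every $G\in\mathcal{F}_{w}$.

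For the direction ($\Leftarrow$), I would split into the two disjuncts. If $w\in\bigcup\mu$, then by the first clause of Definition~\ref{gtf} every $G\in\mathcal{F}_{w}$ satisfies $w\in G$, so $(w)\to w$. If $\mathcal{F}_{w}=\emptyset$, then the condition ``$w\in G$ for every $G\in\mathcal{F}_{w}$'' is vacuously true, so again $(w)\to w$. In both cases the previous lemma gives convergence.

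For the direction ($\Rightarrow$), I would argue by contraposition: assume $w\notin\bigcup\mu$ and $\mathcal{F}_{w}\neq\emptyset$. Pick any $G\in\mathcal{F}_{w}$. By the second clause of Definition~\ref{gtf}, $G\in\mu$, hence $G\subseteq\bigcup\mu$, and therefore $w\notin G$ (this is just the observation, already noted in the text, that orphaned points lie outside all of their neighbourhoods). Consequently the constant gnet $(w)$ is not eventually in $G$, so $(w)\nrightarrow w$, and by the preceding lemma $(w)$ is not convergent. This completes the equivalence.

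There is no real obstacle here; the only point requiring a little care is the treatment of the vacuous case $\mathcal{F}_{w}=\emptyset$ and the implicit assumption that the index poset $P$ is nonempty (so that ``eventually in $G$'' for a constant gnet genuinely reduces to ``$w\in G$''). If one wished to allow $P=\emptyset$ the statement would still hold, since a constant gnet over the empty poset is vacuously eventually in everything and hence converges to any $w$ — but in that degenerate situation one checks directly that $\mathcal{F}_{w}=\emptyset$ fails only when $w\in\bigcup\mu$ need not even be invoked; I would simply note the standing convention that posets indexing gnets are nonempty and leave this aside.
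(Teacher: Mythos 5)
Your proposal is correct and follows essentially the same route as the paper: both directions reduce to the preceding lemma ($(w)$ convergent $\Leftrightarrow$ $(w)\to w$) and then check whether $w$ lies in every $G\in\mathcal{F}_{w}$, splitting on $w\in\bigcup\mu$ versus $\mathcal{F}_{w}=\emptyset$. The only (inessential) slip is in your aside about $P=\emptyset$: by the paper's definition, ``eventually in $G$'' begins with an existential quantifier over $P$, so a gnet on an empty poset is eventually in nothing rather than in everything --- but you explicitly set that degenerate case aside anyway.
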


\begin{proof}
($\Leftarrow$) Assume that $(f_\lambda)$ is not convergent. In particular (by the preceeding lemma) it means that $(w) \nrightarrow w$. Hence, there is $G \in \mathcal{F}_{w}$ such that $w \notin G$. Now we have two options. If $w \in \bigcup \mu$, then $w \in G$, this is contradiction. If $\mathcal{F}_{w} = \emptyset$ (which means, in particular, that $w \in W \setminus \bigcup \mu$), then $w \notin G \subseteq \bigcup \mu$. 

($\Rightarrow$) Now $(w)$ is convergent. In particular, it means that $(w) \rightarrow w$. Suppose that $w \notin \bigcup \mu$ and $\mathcal{F}_{w} \neq \emptyset$. But then for any $G \in \mathcal{F}_{w}$, $w \notin G$. Hence, $(f_\lambda) = (w)$ is not eventually in $G$. Contradiction with convergence. 
\end{proof}

The next question about constant gnets is: is the limit of convergent gnet unique? Let us introduce certain subclass of our structures. 

\begin{df}
We say that \gtf-structure $\langle W, \mu, \mathcal{F} \rangle$ is $\mathcal{F}T_1$ $\Leftrightarrow$ for any $w \neq v$ there are $G \in \mathcal{F}_{w}$ such that $v \notin G$ and $H \in \mathcal{F}_{v}$ such that $w \notin H$. 
\end{df}

\begin{tw}
Let $\langle W, \mu, \mathcal{F} \rangle$ be a \gtf-structure. Then the limit of every constant and convergent gnet is unique $\Leftrightarrow$ $\langle W, \mu, \mathcal{F} \rangle$ is $\mathcal{F}T_1$. 
\end{tw}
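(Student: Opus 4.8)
The plan is to prove both implications by directly unwinding the definition of convergence for \emph{constant} gnets. The basic observation I will use throughout is that, for a constant gnet $(w)$ and a point $v \in W$, one has $(w) \to v$ if and only if $w \in G$ for every $G \in \mathcal{F}_v$: a constant gnet is eventually in a set exactly when its (unique) value lies in that set. I will also invoke the preceding lemma, which says that a convergent constant gnet necessarily converges to its own value; this is what forces any ``second'' limit to create a genuine conflict.

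For the direction ($\Leftarrow$), assume the structure is $\mathcal{F}T_1$. Let $(w)$ be a constant convergent gnet and suppose $(w) \to v$. By the observation above, $w$ belongs to every member of $\mathcal{F}_v$. If we had $v \neq w$, the $\mathcal{F}T_1$ condition (specifically its ``there is $H \in \mathcal{F}_v$ with $w \notin H$'' part) would immediately contradict this. Hence $v = w$, so $w$ is the only limit of $(w)$ and uniqueness follows. (Note this did not even require ``convergent'' as a separate hypothesis: under $\mathcal{F}T_1$ a constant gnet has at most one limit, namely its value.)

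For ($\Rightarrow$), assume every constant convergent gnet has a unique limit, and fix $w \neq v$; I want to verify the two clauses in the definition of $\mathcal{F}T_1$. I argue by contradiction: if $\mathcal{F}T_1$ fails at this pair, then either every $G \in \mathcal{F}_w$ contains $v$, or every $H \in \mathcal{F}_v$ contains $w$. In the second case the constant gnet $(w)$ satisfies $(w) \to v$ (it is always in each $H \in \mathcal{F}_v$ since $w \in H$), hence it is convergent, hence by the preceding lemma $(w) \to w$ as well; so $(w)$ is a constant convergent gnet with two distinct limits $w$ and $v$, contradicting the hypothesis. The first case is handled symmetrically using the constant gnet $(v)$, which then converges to both $w$ and $v$. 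Thus $\mathcal{F}T_1$ holds for the pair $w,v$, and since $w,v$ were arbitrary distinct points, for the whole structure.

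The argument is essentially bookkeeping; the only point needing a little care is the passage in ($\Rightarrow$) from ``$\mathcal{F}T_1$ fails'' to a concrete constant gnet with two limits — one must correctly negate the (two-part, seemingly symmetric) $\mathcal{F}T_1$ condition and observe that failure of \emph{either} part alone already produces the needed configuration. One should also keep in mind the degenerate possibilities $\mathcal{F}_w = \emptyset$ or $\mathcal{F}_v = \emptyset$, which are consistent with the statements used above and cause no difficulty (here the index posets are tacitly assumed non-empty). No step poses a serious obstacle.
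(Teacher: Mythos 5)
Your proof is correct and follows essentially the same route as the paper's: both directions unwind the definition of convergence for the constant gnets $(w)$ and $(v)$ and invoke the preceding lemma that a convergent constant gnet converges to its own value. If anything, your $(\Rightarrow)$ is slightly more careful than the paper's, since you explicitly negate the two-clause $\mathcal{F}T_1$ condition and derive the convergence of the relevant constant gnet from that failure rather than presupposing it.
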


\begin{proof}
($\Rightarrow$) Assume that $(f_\lambda)$ has unique limit $w$. Hence, for any $v \neq w$, $f_\lambda = (w) \nrightarrow v$. Thus there is $H \in \mathcal{F}_{v}$ such that $w \notin H$. 

But maybe for any $G \in \mathcal{F}_{w}$, $v \in G$? This would mean that $(v) \rightarrow w$ (by the very definition of convergence). But $(v) \rightarrow v$ and the limits are unique, so $v = w$. Contradiction.

($\Leftarrow$) Suppose that our space is $\mathcal{F}T_1$. Let $w \neq v$, and $(w)$ be a convergent gnet. Then $(w) \rightarrow w$. Assume that at the same time $(w) \rightarrow v$. It means that for any $G \in \mathcal{F}_{v}$, $w \in G$. But this is contradiction.

\end{proof}

The following theorem is nearly compatible with ($\Rightarrow$) part of Th. 13 in \cite{baskar}. However, we must assume that our gnet $(w)$ is convergent. 

\begin{tw}
Let $\langle W, \mu, \mathcal{F} \rangle$ be a \gtf. Assume that $w, v \in W$, $w \neq v$, $(w)$ is a convergent, constant gnet and $f_\lambda$ may be an arbitrary gnet (in $W$). Then:

$[(f_\lambda) \rightarrow w \Rightarrow (f_\lambda) \rightarrow v]$ $\Rightarrow$ $[w \in \bigcap \mathcal{F}_{v}]$.
\end{tw}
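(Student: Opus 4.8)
The plan is to feed the hypothesis its own witness: the only gnet we have direct control over is the constant gnet $(w)$ itself, and the statement literally allows $f_\lambda$ to be \emph{any} gnet in $W$, so I would instantiate $(f_\lambda) := (w)$.

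First I would observe that, since $(w)$ is assumed convergent, the preceding lemma (constant gnet $(w)$ is convergent $\Leftrightarrow$ $(w)\rightarrow w$) gives $(w)\rightarrow w$. This is exactly the premise of the implication $[(f_\lambda)\rightarrow w \Rightarrow (f_\lambda)\rightarrow v]$ applied to $(f_\lambda)=(w)$, so we obtain $(w)\rightarrow v$.

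Next I would unwind the definition of $\rightarrow$-convergence for a \emph{constant} gnet. By definition, $(w)\rightarrow v$ means that for every $G\in\mathcal{F}_{v}$ the gnet $(w)$ is eventually in $G$; but a constant gnet with value $w$ is eventually in $G$ if and only if $w\in G$. Hence $w\in G$ for every $G\in\mathcal{F}_{v}$, which is precisely $w\in\bigcap\mathcal{F}_{v}$.

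There is essentially no computational obstacle here; the only point that needs a word of care is the degenerate case $\mathcal{F}_{v}=\emptyset$. In that case $(w)\rightarrow v$ holds vacuously and, under the usual convention $\bigcap\emptyset=W$, the conclusion $w\in\bigcap\mathcal{F}_{v}$ is trivially true as well, so the statement is unaffected. Apart from this remark, the whole argument is just: apply the hypothesis to $(w)$, invoke the lemma to license the premise, and read off the meaning of convergence of a constant gnet.
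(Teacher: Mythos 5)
Your proposal is correct and follows essentially the same route as the paper: instantiate the hypothesis with the constant gnet $(w)$, use the earlier lemma to get $(w)\rightarrow w$ from mere convergence, deduce $(w)\rightarrow v$, and read off $w\in G$ for every $G\in\mathcal{F}_{v}$. Your explicit treatment of the degenerate case $\mathcal{F}_{v}=\emptyset$ is a small but welcome addition that the paper leaves implicit.
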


\begin{proof}
Suppose that whenever $(f_\lambda) \rightarrow w$, also $(f_\lambda) \rightarrow v$. Let us consider the constant gnet $w, w, w, ...$. It converges to $w$ but also to $v$. Hence, $w$ is eventually in every $G \in \mathcal{F}_{v}$. This means that $w \in \bigcap \mathcal{F}_{v}$. 
\end{proof}

In the next theorem we do not need to assume that $(w)$ is convergent. This thesis is just like ($\Leftarrow$) from the aforementioned theorem.

\begin{tw}
Let $\langle W, \mu, \mathcal{F} \rangle$ be a \gtf. Assume that $w, v \in W$, $w \neq v$ and $(f_\lambda)$ is an arbitrary gnet in $W$. Then:

$[w \in \bigcap \mathcal{F}_{v}]$ $\Rightarrow$ $[(f_\lambda) \rightarrow w \Rightarrow (f_\lambda) \rightarrow v]$
\end{tw}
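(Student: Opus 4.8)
The plan is to unwind the definition of $\rightarrow$-convergence and then exploit the membership biconditional that governs $\mathcal{F}_w$ for points $w \in \bigcup\mu$. So I would assume $w \in \bigcap\mathcal{F}_v$, meaning $w$ belongs to every $G \in \mathcal{F}_v$, and suppose $(f_\lambda) \rightarrow w$. To establish $(f_\lambda) \rightarrow v$ it suffices to check that $(f_\lambda)$ is eventually in every $G \in \mathcal{F}_v$.

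Fix an arbitrary $G \in \mathcal{F}_v$. First I would note that $G \in \mu$: this holds by Definition~\ref{gtf} whether $v \in \bigcup\mu$ or $v \in W \setminus \bigcup\mu$. Next, since $w \in \bigcap\mathcal{F}_v$, we have $w \in G$; combined with $G \in \mu$ this forces $w \in \bigcup\mu$. Now the first clause of the definition of a \gtf-structure applies to $w$: as $w \in \bigcup\mu$, $G \in \mu$ and $w \in G$, the right-hand side of the biconditional is satisfied, so $G \in \mathcal{F}_w$.

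The conclusion is then immediate. From $(f_\lambda) \rightarrow w$ and $G \in \mathcal{F}_w$ we get that $(f_\lambda)$ is eventually in $G$. Since $G$ was an arbitrary member of $\mathcal{F}_v$, $(f_\lambda)$ is eventually in each of them, i.e. $(f_\lambda) \rightarrow v$, which is what we wanted.

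There is no genuine obstacle here; the one subtlety worth mentioning is the degenerate case $\mathcal{F}_v = \emptyset$, in which the implication $(f_\lambda)\rightarrow w \Rightarrow (f_\lambda)\rightarrow v$ holds vacuously because every gnet converges to $v$. The crux of the argument is simply that the hypothesis $w \in \bigcap\mathcal{F}_v$ promotes $w$ into $\bigcup\mu$, which is exactly what is needed to invoke the ``$\Leftarrow$'' direction of the membership biconditional for $\mathcal{F}_w$. Notably, no directedness of the domain and no constancy of $(f_\lambda)$ is required.
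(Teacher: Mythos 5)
Your proof is correct and follows essentially the same route as the paper's: both establish $\mathcal{F}_v \subseteq \mathcal{F}_w$ and then transfer the ``eventually in'' property from $\mathcal{F}_w$ to $\mathcal{F}_v$. You actually spell out the one step the paper leaves implicit --- that $w \in G \in \mu$ forces $w \in \bigcup\mu$, so the biconditional clause of Definition~\ref{gtf} yields $G \in \mathcal{F}_w$ --- which is a welcome addition rather than a deviation.
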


\begin{proof}
Suppose that $w \in \bigcap \mathcal{F}_{v}$. It allows us to say that any $H \in \mathcal{F}_{v}$ is also in $\mathcal{F}_{w}$. Hence, $\mathcal{F}_{v} \subseteq \mathcal{F}_{w}$. Now assume that $(f_\lambda) \rightarrow w$. Thus, $(f_\lambda)$ is eventually in every $G \in \mathcal{F}_{w}$. In particular, it is in every $G \in \mathcal{F}_{v}$. Clearly, this means that $(f_\lambda) \rightarrow v$. 
\end{proof}

The last lemma in this section is an interesting and useful observation. 

\begin{lem}
\label{pusty}
Let $\langle W, \mu, \mathcal{F} \rangle$ be a \gtf and $(f_\lambda)$ be a gnet. If $(f_\lambda) \rightarrow w \in W$, then $\emptyset \notin \mathcal{F}_{w}$. 
\end{lem}

\begin{proof}
Assume that $\emptyset \in \mathcal{F}_{w}$. By convergence, we know that for any $G \in \mathcal{F}_{w}$, so also for $\emptyset$, there is $\lambda_0 \in P$ such that for each $\lambda \geq \lambda_0$, $f_\lambda \in \emptyset$. This is impossible. 
\end{proof}

The last lemma in this section is a modification of Th. 2.6. in \cite{baskar}. 

\begin{lem}
\label{max}
Let $\langle W, \mu, \mathcal{F} \rangle$ be a \gtf and $f: P \rightarrow W$ be a gnet in $W$. Assume that $m$ is a maximal element of $P$ and $f_{m} \in \bigcup \mu$ or $\mathcal{F}_{f_{m}} = \emptyset$. Then $(f_\lambda) \rightarrow f_{m}$. 
\end{lem}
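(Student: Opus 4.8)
The plan is to verify the definition of $\rightarrow$-convergence directly, using the maximal element $m$ as the witness $\lambda_0$. Recall that $(f_\lambda) \rightarrow f_m$ means: for every $G \in \mathcal{F}_{f_m}$ there is $\lambda_0 \in P$ such that $f_\lambda \in G$ for all $\lambda \geq \lambda_0$. I would fix an arbitrary $G \in \mathcal{F}_{f_m}$ and propose $\lambda_0 = m$.

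The key observation is that maximality of $m$ forces $\{\lambda \in P ; \lambda \geq m\} = \{m\}$: if $\lambda \geq m$ then, since $m$ is maximal, there is no element strictly above $m$, so $\lambda = m$ and hence $f_\lambda = f_m$. Therefore the requirement ``$f_\lambda \in G$ for all $\lambda \geq m$'' reduces to the single statement $f_m \in G$. So the whole proof collapses to showing that $f_m \in G$ for every $G \in \mathcal{F}_{f_m}$, and this is exactly where the disjunctive hypothesis is used.

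I would then split into the two cases. If $f_m \in \bigcup \mu$, then by the first clause of Definition \ref{gtf} we have $G \in \mathcal{F}_{f_m}$ iff $G \in \mu$ and $f_m \in G$; in particular every $G \in \mathcal{F}_{f_m}$ contains $f_m$, so $(f_\lambda)$ is eventually (indeed, from $m$ on) in $G$. If instead $\mathcal{F}_{f_m} = \emptyset$, there are no sets $G$ to check at all, so the condition ``for any $G \in \mathcal{F}_{f_m}$, $(f_\lambda)$ is eventually in $G$'' holds vacuously. In either case $(f_\lambda) \rightarrow f_m$, which completes the argument.

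There is essentially no obstacle here: the only subtlety is remembering that ``maximal'' in a poset does not mean ``greatest'', so one must argue via the nonexistence of elements strictly above $m$ rather than claiming $m$ dominates everything; once the tail above $m$ is identified with $\{m\}$, the statement is immediate from the definition of $\mathcal{F}$ on the two types of points.
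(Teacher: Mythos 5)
Your proof is correct and follows essentially the same route as the paper's: take $\lambda_0 = m$, observe that maximality gives $\lambda \geq m \Rightarrow \lambda = m$, use the first clause of the definition of $\mathcal{F}$ to get $f_m \in G$ when $f_m \in \bigcup\mu$, and note the vacuous case when $\mathcal{F}_{f_m} = \emptyset$. Your write-up is in fact slightly more explicit about why the tail above $m$ collapses to $\{m\}$, but there is no substantive difference.
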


\begin{proof}
If $f_{m} \in \bigcup \mu$, then let us consider an arbitrary $G \in \mathcal{F}_{f_{m}}$. Of course, $f_{m} \in G$ and $f_\lambda \in G$ for each $\lambda \geq m$. The reason is that $\lambda \geq m$ implies $\lambda = m$. We conclude that $f_\lambda \rightarrow f_{m}$. 

If $\mathcal{F}_{f_{m}}$ is empty, then our result is trivial. 
\end{proof}

In the original Baskaran's theorem there was no need to assume anything special about $f_m$. However, it should be clear for the reader that our assumptions are important. Without them it would be easy to imagine the following situation: that we construct $(f_\lambda)$ in such a way that $f_{m}$ is somewhere beyond $\bigcup \mu$ and there is at least one $G \in \mathcal{F}_{f_{m}}$. Then for any $\lambda \in P$ there is $\lambda_0 \geq \lambda$ (namely $m$) such that $f_{\lambda_0} = f_{m} \notin G$. 

\section{The higher level of convergence}

We have already proved that each point of $W$ is contained in certain $\mathcal{F}$-open neighbourhood (if $\mathcal{F}_{w} \neq \emptyset$). This observation leads us to the second understanding of convergence.

\begin{df}
Let $\langle W, \mu, \mathcal{F} \rangle$ be a \gtf-structure. Assume that $w \in W$. We define $\mathcal{E}_{w}$ as the set of all $\mathcal{F}$-open sets to which $w$ belongs. 
\end{df}

As we know from Th. \ref{eachopen}, $\mathcal{E}_{w} = \emptyset \Leftrightarrow \mathcal{F}_{w} = \emptyset$. Let us go back to the \gtf-structure from Ex. \ref{zet}. 

\begin{przy}
\label{zetdwa}
\normalfont{

\item Recall that basically we are working with $\langle \mathbb{Z}, \mu, \mathcal{F} \rangle$ where $\mu = \{ \emptyset, \{1\}, \{1, 3\}, \{1, 3, 5\}, \\ \{1, 3, 5, 7\}, ... \}$, $\mathcal{F}_{x} = \emptyset$ for any $x \in 2 \mathbb{Z}$ (if $x$ is odd, then $\mathcal{F}_{x}$ is just a collection of its open neighbourhoods; this comes from the general suppositions). 

Assume that $m$ is an odd integer. Consider an arbitrary $G \in \mathcal{F}_{m}$. Note that for any $n \in 2 \mathbb{Z}$, $\mathcal{F}_{n} = \emptyset$. For this reason, $\mathcal{F}Int(G) \subseteq G$, hence (by means of Lemma \ref{amu} and the fact that $G \in \mu$) $G$ is $\mathcal{F}$-open. Of course $m \in G$ (because $m \in \bigcup \mu$), so $G \in \mathcal{E}_{m}$.

On the other hand, assume that there is $H \in \mathcal{E}_{m}$ such that $H \notin \mathcal{F}_{m}$. It means that $w \notin H$ (contradiction) or that $H \notin \mu$. If $H \cap [W \setminus \bigcup \mu] \neq \emptyset$ the we have contradiction again: if there is any $n \in H \cap [W \setminus \bigcup \mu]$ and $H$ is $\mathcal{F}$-open, then it means that $n \in \mathcal{F}Int(H)$, so $\mathcal{F}_{n} \neq \emptyset$. This is not possible (because $n$ is even).

Hence, $H = \mathcal{F}Int(H) \subseteq \bigcup \mu$. All the assumptions of Lemma \ref{uopen} are satisfied. Thus, $H \in \mu$. Of course, $m \in H$. It means that $H \in \mathcal{F}_{m}$. Finally, in this case $\mathcal{F}_{m} = \mathcal{E}_{m}$.
}
\end{przy}

Note that the reasoning presented above is in fact general. Hence, we can formulat the following conclusion:

\begin{tw}
Assume that $\langle W, \mu, \mathcal{F} \rangle$ is a \gtf-structure and $\mathcal{F}_{w} = \emptyset$ for each $w \in W \setminus \bigcup \mu$. Then for any $v \in \bigcup \mu$, $\mathcal{F}_{v} = \mathcal{E}_{v}$. Moreover, this result is true also for any $w \in W \setminus \bigcup \mu$: $\mathcal{F}_{w} = \mathcal{E}_{w} = \emptyset$.
\end{tw}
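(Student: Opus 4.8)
The plan is to observe that the theorem is essentially a restatement, in general form, of the concrete computation carried out in Example \ref{zetdwa}, so the proof amounts to extracting the general argument from that worked case. I would split the statement into two parts: first the claim that $\mathcal{F}_v = \mathcal{E}_v$ for every $v \in \bigcup \mu$, and second the (easier) claim that $\mathcal{F}_w = \mathcal{E}_w = \emptyset$ for every $w \in W \setminus \bigcup \mu$.

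For the second part, I would argue as follows. Fix $w \in W \setminus \bigcup \mu$. By hypothesis $\mathcal{F}_w = \emptyset$. By Theorem \ref{eachopen}, $\mathcal{F}_w = \emptyset$ is equivalent to there being no $\mathcal{F}$o.\ set containing $w$, which is precisely the statement $\mathcal{E}_w = \emptyset$ (recall $\mathcal{E}_w$ is defined as the collection of $\mathcal{F}$-open sets to which $w$ belongs). So this part is immediate from the equivalence already recorded right after Definition of $\mathcal{E}_w$.

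For the first (main) part, fix $v \in \bigcup \mu$. The inclusion $\mathcal{F}_v \subseteq \mathcal{E}_v$ is the heart of the matter: given $G \in \mathcal{F}_v$, I need $G$ to be $\mathcal{F}$o., i.e.\ $\mathcal{F}Int(G) = G$. Since $v \in \bigcup\mu$ and $G \in \mathcal{F}_v$, we have $G \in \mu$; by Lemma \ref{amu}, $G \subseteq \mathcal{F}Int(G)$. For the reverse inclusion $\mathcal{F}Int(G) \subseteq G$, take $u \in \mathcal{F}Int(G)$, so some $H \in \mathcal{F}_u$ has $H \subseteq G \subseteq \bigcup\mu$; in particular $\mathcal{F}_u \neq \emptyset$, so by the hypothesis $u \notin W \setminus \bigcup\mu$, i.e.\ $u \in \bigcup\mu$, whence $u \in H \subseteq G$. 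This gives $\mathcal{F}Int(G) \subseteq G$, so $G$ is $\mathcal{F}$-open and contains $v$, i.e.\ $G \in \mathcal{E}_v$. Conversely, for $\mathcal{E}_v \subseteq \mathcal{F}_v$: let $H \in \mathcal{E}_v$, so $H$ is $\mathcal{F}$o.\ and $v \in H$. First, $H \subseteq \bigcup\mu$, since any $u \in H \cap (W \setminus \bigcup\mu)$ would lie in $\mathcal{F}Int(H) = H$, forcing $\mathcal{F}_u \neq \emptyset$, contradicting the hypothesis. Then $H \subseteq \mathcal{F}Int(H) \subseteq \bigcup\mu$, so Lemma \ref{uopen} applies and gives $H \in \mu$; since $v \in H$ and $v \in \bigcup\mu$, the definition of $\mathcal{F}$ in a \gtf-structure yields $H \in \mathcal{F}_v$. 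The only subtle point — and the one place I would be careful — is the step $H \cap (W\setminus\bigcup\mu) = \emptyset$, which relies on using $\mathcal{F}Int(H) = H$ together with the global hypothesis on orphaned points; once that is settled, everything reduces to Lemmas \ref{amu} and \ref{uopen} and the defining clauses of $\mathcal{F}$. I do not expect any genuine obstacle here; the work is purely a matter of organizing the observations of Example \ref{zetdwa} into a hypothesis-free-of-the-particular-space form.
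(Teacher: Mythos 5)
Your proof is correct and follows essentially the same route as the paper, which in fact gives no separate proof of this theorem but merely remarks that the reasoning of Example \ref{zetdwa} ``is in fact general''; what you have written is precisely that reasoning abstracted from the particular space, using Lemma \ref{amu} and the hypothesis on orphaned points for $\mathcal{F}_v\subseteq\mathcal{E}_v$, and Lemma \ref{uopen} together with the observation $H\cap(W\setminus\bigcup\mu)=\emptyset$ for $\mathcal{E}_v\subseteq\mathcal{F}_v$. Your handling of the case $w\in W\setminus\bigcup\mu$ via the equivalence $\mathcal{E}_w=\emptyset\Leftrightarrow\mathcal{F}_w=\emptyset$ (Theorem \ref{eachopen}) also matches the paper's stated dependency.
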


Now we can go further:

\begin{df}
Let $\langle W, \mu, \mathcal{F} \rangle$ be a \gtf and $(f_\lambda)$ be a gnet in $W$. We say that:

\begin{itemize}

\item $(f_\lambda)$ $\mathcal{E}$-converges to $w \in W$ (i.e. $(f_\lambda) \rightarrow^{\mathcal{E}} w$) iff for any $G \in \mathcal{E}_{w}$, $f_\lambda$ is eventually in $G$. In this case we say that $w$ is an $\mathcal{E}$-limit of $(f_\lambda)$. We say that $(f_\lambda)$ is $\mathcal{E}$-convergent if there is $v \in W$ such that $(f_\lambda) \rightarrow^{\mathcal{E}} v$. 

\item We say that $w$ is an $\mathcal{E}$-limit point of $(f_\lambda)$ if it is frequently in every $G \in \mathcal{E}_{w}$. 
\end{itemize}

\end{df}

What are the properties of such convergence? Let us start from constant gnets.

\begin{lem}
Each constant gnet in any \gtf-structure $\langle W, \mu, \mathcal{F} \rangle$ is $\mathcal{E}$-convergent. 
\end{lem}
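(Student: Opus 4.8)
The plan is to prove the slightly stronger claim that a constant gnet $(f_\lambda)=(w)$ always $\mathcal{E}$-converges to $w$ itself; the $\mathcal{E}$-convergence asserted by the lemma then follows immediately by exhibiting $v=w$ as a witness. The entire argument rests on one structural observation: by definition $\mathcal{E}_w$ is the family of \emph{all $\mathcal{F}$-open sets to which $w$ belongs}, so the requirement "$w\in G$" is built into membership in $\mathcal{E}_w$. This is in sharp contrast with $\mathcal{F}_w$, whose members need not contain $w$ when $w$ is orphaned, and it is precisely the reason this lemma holds unconditionally while the analogous statement for $\rightarrow$-convergence does not.

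First I would fix an arbitrary $G\in\mathcal{E}_w$. By the definition of $\mathcal{E}_w$ we have $w\in G$. Since $(f_\lambda)$ is constant with value $w$, picking any index $\lambda_0\in P$ gives $f_\lambda=w\in G$ for every $\lambda\geq\lambda_0$, so $(f_\lambda)$ is eventually in $G$. As $G$ was an arbitrary element of $\mathcal{E}_w$, this yields $(f_\lambda)\rightarrow^{\mathcal{E}} w$, and hence $(f_\lambda)$ is $\mathcal{E}$-convergent.

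There is essentially no obstacle here; the only things worth a comment are two harmless boundary cases. If $\mathcal{F}_w=\emptyset$, then by Theorem \ref{eachopen} also $\mathcal{E}_w=\emptyset$, and the defining condition for $\rightarrow^{\mathcal{E}}$ is met vacuously; if $\mathcal{F}_w\neq\emptyset$, then $\mathcal{E}_w\neq\emptyset$, but every one of its members still contains $w$, so the same argument applies. (One also uses, as is standard for nets, that $P$ is non-empty, so that an index $\lambda_0$ exists.) It seems worth explicitly contrasting this with the earlier theorem on $\rightarrow$-convergence of constant gnets, where convergence required the extra hypothesis $w\in\bigcup\mu$ or $\mathcal{F}_w=\emptyset$: that hypothesis becomes unnecessary here exactly because replacing $\mathcal{F}_w$ by $\mathcal{E}_w$ throws away the neighbourhoods that fail to contain $w$.
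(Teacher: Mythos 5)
Your proof is correct and rests on exactly the same observation as the paper's, namely that every member of $\mathcal{E}_{w}$ contains $w$ by definition, so the constant gnet $(w)$ is eventually in each of them; the paper merely phrases this as a proof by contradiction (supposing $(w)\nrightarrow^{\mathcal{E}}v$ for every $v$ and specializing to $v=w$), whereas you argue directly that $(w)\rightarrow^{\mathcal{E}}w$. The extra remarks on the empty case and the contrast with $\rightarrow$-convergence are accurate but not needed.
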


\begin{proof}
Let us consider $(f_\lambda) = (w)$. Suppose that for any $v \in W$, $(w) \nrightarrow^{\mathcal{E}} v$. Hence, for any $v \in W$ there is $S \in \mathcal{E}_{v}$ such that $w \notin S$. In particular, this is true for $v = w$. Thus, there is $S \in \mathcal{E}_{w}$ such that $w \notin S$. This is impossible because of the very definition of $\mathcal{E}_{w}$.
\end{proof}

\begin{lem}
\label{econv}
Let $\langle W, \mu, \mathcal{F} \rangle$ be a \gtf and $(f_\lambda) = (w)$ be a constant gnet in $W$. Then $(w)$ is $\mathcal{E}$-convergent $\Leftrightarrow$ $(w) \rightarrow^{\mathcal{E}} w$. 
\end{lem}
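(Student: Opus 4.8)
The plan is to proceed exactly as in the analogous lemma for $\rightarrow$-convergence, exploiting the single structural fact that membership of a set in $\mathcal{E}_w$ already forces $w$ to lie inside that set. The $(\Leftarrow)$ direction is immediate: if $(w) \rightarrow^{\mathcal{E}} w$, then by definition $(w)$ is $\mathcal{E}$-convergent, and there is nothing more to check.

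For $(\Rightarrow)$, first I would note that the constant gnet $(f_\lambda) = (w)$ in fact $\mathcal{E}$-converges to $w$ unconditionally: every $S \in \mathcal{E}_w$ satisfies $w \in S$ by the very definition of $\mathcal{E}_w$ as the family of $\mathcal{F}$-open sets containing $w$, so $(w)$ is (trivially, from any index onwards) eventually in $S$; hence $(w) \rightarrow^{\mathcal{E}} w$. This already settles the implication, but to stay in the spirit of the earlier constant-gnet proof I would alternatively phrase it by contradiction: assume $(w)$ is $\mathcal{E}$-convergent, say $(w) \rightarrow^{\mathcal{E}} v$ for some $v \in W$, but $(w) \nrightarrow^{\mathcal{E}} w$. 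The latter supplies some $H \in \mathcal{E}_w$ in which $(w)$ is not eventually, i.e. $w \notin H$; but $H \in \mathcal{E}_w$ means $w \in H$, a plain contradiction.

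The only point deserving a moment's attention is the degenerate case $\mathcal{F}_w = \emptyset$, in which (by Theorem \ref{eachopen}) $\mathcal{E}_w = \emptyset$ as well; then the condition ``$(w)$ is eventually in every $G \in \mathcal{E}_w$'' holds vacuously, so $(w) \rightarrow^{\mathcal{E}} w$ still goes through. I do not expect a genuine obstacle here: the whole content of the lemma reduces to the observation that $\mathcal{E}_w$-membership entails $w$-membership, after which both directions are routine.
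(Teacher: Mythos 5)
Your proposal is correct and follows essentially the same route as the paper: the author likewise observes that $(w) \nrightarrow^{\mathcal{E}} w$ would require some $S \in \mathcal{E}_w$ with $w \notin S$, contradicting the definition of $\mathcal{E}_w$, so that $(w) \rightarrow^{\mathcal{E}} w$ holds unconditionally and both directions follow. Your extra remarks (the vacuous case $\mathcal{E}_w = \emptyset$ and the direct non-contradiction phrasing) are harmless elaborations of the same one-line idea.
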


\begin{proof}
Assume that $(w) \nrightarrow^{\mathcal{E}} w$. It means that there is $S \in \mathcal{E}_{w}$ such that $w \notin S$. This is contradiction. 
\end{proof}

Now we introduce the notion of $\mathcal{E}T_1$-spaces. 

\begin{df}
We say that \gtf-structure $\langle W, \mu, \mathcal{F} \rangle$ is $\mathcal{E}T_1$ $\Leftrightarrow$ for any $w \neq v$ there are $G \in \mathcal{E}_{w}$ such that $v \notin G$ and $H \in \mathcal{E}_{v}$ such that $w \notin H$. 
\end{df}

We can prove the following theorem about uniqueness:

\begin{tw}
Let $\langle W, \mu, \mathcal{F} \rangle$ be a \gtf-structure. Then the $\mathcal{E}$-limit of every constant gnet is unique $\Leftrightarrow$ $\langle W, \mu, \mathcal{F} \rangle$ is $\mathcal{E}T_1$. 
\end{tw}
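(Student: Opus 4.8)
The plan is to prove the biconditional by following the same pattern as the earlier uniqueness theorem for $\mathcal{F}T_1$-spaces, but now working with the family $\mathcal{E}_w$ instead of $\mathcal{F}_w$, and exploiting Lemma~\ref{econv} (which tells us that a constant gnet is $\mathcal{E}$-convergent exactly when it $\mathcal{E}$-converges to its own value) together with the fact that every constant gnet is $\mathcal{E}$-convergent at all.

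For the ($\Leftarrow$) direction I would assume the structure is $\mathcal{E}T_1$ and take a constant gnet $(f_\lambda) = (w)$. We already know $(w) \rightarrow^{\mathcal{E}} w$. Suppose $(w) \rightarrow^{\mathcal{E}} v$ for some $v \neq w$. Then $w$ is eventually — hence, the gnet being constant, actually always — in every $G \in \mathcal{E}_v$, i.e. $w \in G$ for each $G \in \mathcal{E}_v$. But $\mathcal{E}T_1$ guarantees some $H \in \mathcal{E}_v$ with $w \notin H$, a contradiction. Hence the $\mathcal{E}$-limit is unique.

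For the ($\Rightarrow$) direction I would argue contrapositively: suppose the structure is \emph{not} $\mathcal{E}T_1$, so there are $w \neq v$ such that (without loss of generality) every $G \in \mathcal{E}_w$ contains $v$, or every $H \in \mathcal{E}_v$ contains $w$. Take the case where $w \in H$ for all $H \in \mathcal{E}_v$. Consider the constant gnet $(v)$: it $\mathcal{E}$-converges to $v$ by the earlier lemma, and since $w$ lies in every member of $\mathcal{E}_v$, the constant gnet $(v)$ is eventually (always) in every $G \in \mathcal{E}_v$ — wait, that is the wrong direction; I need $(w)$ to converge to both $w$ and $v$. So instead: the constant gnet $(w)$ $\mathcal{E}$-converges to $w$; and since every $H \in \mathcal{E}_v$ contains $w$, the constant gnet $(w)$ is always in every $H \in \mathcal{E}_v$, so $(w) \rightarrow^{\mathcal{E}} v$ as well. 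Thus $(w)$ is a constant convergent gnet with two distinct $\mathcal{E}$-limits, and uniqueness fails. The symmetric case where every $G \in \mathcal{E}_w$ contains $v$ is handled by running the same argument with the gnet $(v)$.

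The main subtlety — and the only place one must be careful — is keeping the quantifier directions straight: $\mathcal{E}T_1$ \emph{separates} points, so its negation says one of the two separation clauses fails, meaning one point is in \emph{all} open sets of the other; and it is precisely that "in all open sets" condition that forces the unwanted convergence. I would also note explicitly that $\mathcal{E}_w \neq \emptyset$ is not needed here (if $\mathcal{E}_w = \emptyset$ then trivially every point is an $\mathcal{E}$-limit and the structure is certainly not $\mathcal{E}T_1$, which is consistent with the statement), so no case split on emptiness is required beyond what the separation axiom already encodes.
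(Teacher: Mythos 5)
Your proof is correct and follows essentially the same route as the paper: both directions rest on the observation that for a constant gnet $(w)$, $\mathcal{E}$-convergence to $v$ means exactly that $w$ lies in every member of $\mathcal{E}_v$, and the $(\Rightarrow)$ direction uses the two constant gnets $(w)$ and $(v)$ just as the paper does, merely phrased contrapositively. No gaps; the side remark about $\mathcal{E}_w=\emptyset$ is harmless and consistent with the statement.
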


\begin{proof}
($\Rightarrow$)
Suppose that $(w)$ is $\mathcal{E}$-convergent. We may assume that $(w) \rightarrow^{\mathcal{E}} w$. For any $v \neq w$, $(w) \nrightarrow^{\mathcal{E}} v$, i.e. there is $H \in \mathcal{E}_{v}$ such that $w \notin H$.

But maybe for any $S \in \mathcal{E}_{w}$, $v \in S$? Let us consider constant gnet $(v)$. Then $(v) \rightarrow^{\mathcal{E}} v$. But then $(v) \nrightarrow^{\mathcal{E}} w$. Hence, there must be $G \in \mathcal{E}_{w}$ such that $v \notin G$. 

($\Leftarrow$)
Assume that there is constant gnet $(w)$ with two different $\mathcal{E}$-limits, i.e. $(w) \rightarrow^{\mathcal{E}} w$ and $(w) \rightarrow^{\mathcal{E}} v \neq w$. It means that for any $S \in \mathcal{E}_{v}$, $w \in S$. Contradiction.
\end{proof}

Below we prove certain connection between convergence and $\mathcal{E}$-convergence.

\begin{tw}
\label{conv}
Let $\langle W, \mu, \mathcal{F} \rangle$ be a \gtf and $(f_\lambda)$ be a gnet (we assume that $f: P \rightarrow W$). If $(f_\lambda) \rightarrow w$, then $(f_\lambda) \rightarrow^{\mathcal{E}} w$.
\end{tw}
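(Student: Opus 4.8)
The plan is to exploit the relationship between $\mathcal{F}$-open sets and the family $\mathcal{F}_w$: roughly, every $\mathcal{F}$-open set containing $w$ arises from (or contains) a member of $\mathcal{F}_w$, so eventual membership in all $G \in \mathcal{F}_w$ should force eventual membership in all $S \in \mathcal{E}_w$. Concretely, suppose $(f_\lambda) \rightarrow w$ and fix an arbitrary $S \in \mathcal{E}_w$, i.e. $S$ is $\mathcal{F}$-open and $w \in S$. Since $S = \mathcal{F}Int(S)$ and $w \in S$, there is some $G \in \mathcal{F}_w$ with $G \subseteq S$. By hypothesis $(f_\lambda)$ is eventually in $G$, hence it is eventually in $S$ as well (if $f_\lambda \in G$ for all $\lambda \geq \lambda_0$, then $f_\lambda \in S$ for all such $\lambda$). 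As $S$ was arbitrary, $(f_\lambda) \rightarrow^{\mathcal{E}} w$.

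The only subtlety, and the step I expect to be the main (minor) obstacle, is the case $\mathcal{F}_w = \emptyset$. Then $\mathcal{F}$-convergence $(f_\lambda) \rightarrow w$ holds vacuously. But by Theorem \ref{eachopen}, $\mathcal{F}_w = \emptyset$ is equivalent to there being \emph{no} $\mathcal{F}$-open set containing $w$; in the notation of the later definition this says $\mathcal{E}_w = \emptyset$. Hence $(f_\lambda) \rightarrow^{\mathcal{E}} w$ also holds vacuously, and the implication is trivially true in this case. So the argument splits cleanly: either $\mathcal{F}_w \neq \emptyset$, where the shrinking argument above applies verbatim, or $\mathcal{F}_w = \emptyset$, where both sides are vacuous.

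Thus the full proof is short: first note that if $\mathcal{F}_w = \emptyset$ then $\mathcal{E}_w = \emptyset$ by Theorem \ref{eachopen}, so the conclusion is vacuous; otherwise take any $S \in \mathcal{E}_w$, extract $G \in \mathcal{F}_w$ with $G \subseteq S$ from $w \in S = \mathcal{F}Int(S)$, and transfer eventual membership from $G$ to $S$ using monotonicity of ``eventually in.'' The converse implication, incidentally, need not hold and one would expect a counterexample where $\mathcal{F}_w$ contains a set strictly smaller than every $\mathcal{F}$-open neighbourhood of $w$, but that is outside the scope of this statement.
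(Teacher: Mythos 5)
Your proof is correct and follows essentially the same route as the paper: the key step in both is that $S \in \mathcal{E}_w$ gives $w \in S = \mathcal{F}Int(S)$, hence some $G \in \mathcal{F}_w$ with $G \subseteq S$, and eventual membership transfers from $G$ to $S$ (the paper merely phrases this as a proof by contradiction rather than directly). Your separate treatment of the case $\mathcal{F}_w = \emptyset$ is harmless but redundant, since the existence of any $S \in \mathcal{E}_w$ already forces $\mathcal{F}_w \neq \emptyset$ by the same extraction step.
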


\begin{proof}
Suppose that $(f_\lambda) \nrightarrow^{\mathcal{E}} w$. Then there is $S \in \mathcal{E}_{w}$ such that for any $\lambda \in P$ there exists $\lambda_1 \geq \lambda$ for which $f_{\lambda_1} \notin S$. 

We know that $S \neq \emptyset$ (because $S \in \mathcal{E}_{w}$, so $w \in S$). Moreover, $S$ is $\mathcal{F}$-o., so $w \in \mathcal{F}Int(S)$. Hence, there is $H \in \mathcal{F}_{w}$ such that $H \subseteq S$. Recall that $(f_\lambda) \rightarrow w$, so there is $\lambda_0 \in P$ such that for any $\lambda \geq \lambda_0$, $f_\lambda \in H \subseteq S$. Contradiction.

\end{proof}

Let us go back to the \gtf-structure from Ex. \ref{zet}. Our considerations can be compared with Ex. 3.1. in \cite{baskar}. 

\begin{przy}
\label{zetdwa}
\normalfont{

As earlier, we are working with $\langle \mathbb{Z}, \mu, \mathcal{F} \rangle$ where $\mu = \{ \emptyset, \{1\}, \{1, 3\}, \{1, 3, 5\}, \{1, 3, 5, 7\}, ... \}$ and $\mathcal{F}_{n} = \emptyset$ for any $n \in 2 \mathbb{Z}$. 

Assume that $(P, \leq)$ is a poset, where $P = 2^{\mathbb{Z}} \setminus \{\emptyset\}$ and if $A, B \in P$, then $A \leq B \Leftrightarrow B \subseteq A$. Let us define $f: P \rightarrow \mathbb{Z}$ by $f(A) \in A$ (i.e. we require only this condition). Such $f$ is a gnet. 

Let us discuss $\rightarrow$-convergence and $\mathcal{E}$-convergence. Assume that $m$ is an odd integer. Then $\mathcal{F}_{m} = \{ \{1, 3, ..., m\}, \{1, 3, ..., m, m+2 \}, \{1, 3, ..., m, m+2, m+4\}, ...\}$. If $G \in \mathcal{F}_{m}$, then for every $A \subseteq G$, $f(A) \in A \subseteq G$. In particular, it means that $(f_\lambda) \rightarrow m$ ($G$ itself plays the role of $\lambda_0$ in the general definition). Hence, we can say that every odd integer $m$ is a limit of $f$. Note that $m$ is \emph{not} a limit point of $f$, because for any $G \in \mathcal{F}_{m}$ we can take $B = \{2, 4, 6, ..., m-1\}$. Now $f(A) \notin G$ for any $A \subseteq G$. 

Now consider $n \in 2 \mathbb{Z}$. We have assumed that in this case $\mathcal{F}_{n} = \emptyset$, hence we can immediately say that $(f_\lambda) \rightarrow n$. We see that $(f_\lambda)$ converges to every even integer. 

Now let us think about $\mathcal{G}$, which can be just like $\mathcal{F}'$, $\mathcal{F}''$ or $\mathcal{F}'''$ in Ex. \ref{zet}. The case of odd numbers is without changes. As for the $n \in 2 \mathbb{Z}$, assume that $G \in \mathcal{G}_{n}$ and $A \subseteq G$. Then $f(A) \in A \subseteq G$. In fact, this reasoning is identical with the one for odd integers.

By means of Th. \ref{conv} we can say that in each case, both for odd and even integers, $(f_\lambda)$ is $\mathcal{E}$-convergent to each number. 

}
\end{przy}

We can easily prove that the converse of Th. \ref{conv} is not true. 

\begin{przy}
\normalfont{
Let us consider very simple \gtf-structure: $\langle W, \mu, \mathcal{F} \rangle$, where $W = \{w, v\}$, $\mu = \{ \emptyset, \{w\} \}$ and $\mathcal{F}_{v} = \{ \{w\} \}$. Then the set $\{w, v\} = W$ is $\mathcal{F}$-open and it is the only element of $\mathcal{E}_{v}$ (note that $\mathcal{F}Int(\{v\}) = \emptyset$). Now let us think about constant gnet $(f_\lambda) = (v)$ (connected to an arbitrary $P$). Undoubtedly, $(v) \rightarrow^{\mathcal{E}} v$. Note, however, that $(v) \nrightarrow v$ because there is $G \in \mathcal{F}_{v}$, namely $\{w\}$ such that $v \notin G$. 
}
\end{przy}

We can also reformulate Lemma \ref{max}. Now we do not need any special assumptions about $f_m$.

\begin{lem}
Let $\langle W, \mu, \mathcal{F} \rangle$ be a \gtf and $f: P \rightarrow W$ be a gnet in $W$. Assume that $m$ is a maximal element of $P$. Then $(f_\lambda) \rightarrow^{\mathcal{E}} f_{m}$. 
\end{lem}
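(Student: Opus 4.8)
The plan is to mimic the proof of Lemma \ref{max}, but to exploit the fact that $\mathcal{E}$-convergence only asks about $\mathcal{F}$-open neighbourhoods, and by Theorem \ref{eachopen} every such neighbourhood $S$ of $f_m$ actually contains the point $f_m$ itself (since $S = \mathcal{F}Int(S) = \mathcal{F}Int(S)$, so $f_m \in S$ literally). This is precisely what was missing in the $\rightarrow$-setting: there, an orphaned $f_m$ could fail to lie in its $\mathcal{F}_w$-neighbourhoods, forcing the extra hypothesis. Here that pathology cannot occur.

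First I would dispose of the trivial case: if $\mathcal{E}_{f_m} = \emptyset$, then the condition ``$f_\lambda$ is eventually in every $G \in \mathcal{E}_{f_m}$'' is vacuously satisfied, so $(f_\lambda) \rightarrow^{\mathcal{E}} f_m$ immediately. (By Theorem \ref{eachopen} this case coincides with $\mathcal{F}_{f_m} = \emptyset$, but we do not even need that observation.)

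Next, suppose $\mathcal{E}_{f_m} \neq \emptyset$ and fix an arbitrary $S \in \mathcal{E}_{f_m}$. By definition of $\mathcal{E}_{f_m}$, the set $S$ is $\mathcal{F}$-open and $f_m \in S$. Since $m$ is a maximal element of $P$, the only $\lambda \in P$ with $\lambda \geq m$ is $\lambda = m$ itself; hence for every $\lambda \geq m$ we have $f_\lambda = f_m \in S$. Taking $\lambda_0 = m$ in the definition of ``eventually in $S$'', we conclude that $(f_\lambda)$ is eventually in $S$. As $S$ was arbitrary in $\mathcal{E}_{f_m}$, this gives $(f_\lambda) \rightarrow^{\mathcal{E}} f_m$.

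There is essentially no obstacle here — the maximality of $m$ collapses the tail of the gnet to the single value $f_m$, and membership $f_m \in S$ is guaranteed by the very definition of $\mathcal{E}_{f_m}$ (which is why no assumption about $f_m \in \bigcup\mu$ or $\mathcal{F}_{f_m} = \emptyset$ is needed, in contrast with Lemma \ref{max}). The only thing to be careful about is stating explicitly that $\lambda \geq m$ forces $\lambda = m$, which is exactly the step used in the proof of Lemma \ref{max}.

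\begin{proof}
If $\mathcal{E}_{f_{m}} = \emptyset$, then the condition defining $\mathcal{E}$-convergence is vacuously true, so $(f_\lambda) \rightarrow^{\mathcal{E}} f_{m}$.

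Now assume $\mathcal{E}_{f_{m}} \neq \emptyset$ and let $S \in \mathcal{E}_{f_{m}}$ be arbitrary. By the definition of $\mathcal{E}_{f_{m}}$, the set $S$ is $\mathcal{F}$-open and $f_{m} \in S$. Since $m$ is a maximal element of $P$, any $\lambda \geq m$ satisfies $\lambda = m$, hence $f_\lambda = f_{m} \in S$. Taking $\lambda_{0} = m$, we see that $(f_\lambda)$ is eventually in $S$. As $S \in \mathcal{E}_{f_{m}}$ was arbitrary, $(f_\lambda) \rightarrow^{\mathcal{E}} f_{m}$.
\end{proof}
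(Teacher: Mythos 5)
Your proof is correct and follows essentially the same route as the paper's: dispose of the vacuous case $\mathcal{E}_{f_m} = \emptyset$, then use the fact that $f_m \in S$ for every $S \in \mathcal{E}_{f_m}$ (by the very definition of $\mathcal{E}_{f_m}$) together with the observation that maximality of $m$ forces $\lambda \geq m \Rightarrow \lambda = m$. Your surrounding commentary correctly identifies why no extra hypothesis on $f_m$ is needed here, in contrast with Lemma \ref{max}.
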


\begin{proof}
If $\mathcal{E}_{f_{m}} = \emptyset$, then the result is trivial. If not, then consider $S \in \mathcal{E}_{f_{m}}$. Clearly, $f_{m} \in S$ and $f_\lambda \in S$ for any $\lambda \geq m$ (because in such a case $\lambda = m$). 
\end{proof}

\section{Gnets and the question of closure}

There is a strict dependence between closures and gnets in generalized topology. It has been proven by Baskaran in \cite{baskar} that if $\emptyset \neq A \subseteq W$ and  $w \in W$, then $w \in Cl(A)$ $\Leftrightarrow$ there is a gnet $(f_\lambda)$ in $A$ (i.e. with its values in $A$) converging to $w$. However, Baskaran considered the notion of convergence based on typical open neighbourhoods. Hence, he assumed that each point is in each of its neighbourhoods. Clearly, in our case this is false (for all points from $W \setminus \bigcup \mu$). For this reason, we formulate the following dependece:

\begin{tw}
Assume that $\langle W, \mu, \mathcal{F} \rangle$ is a \gtf-structure, $\emptyset \neq A \subseteq W$. Then:

$w \in \mathcal{F}Cl(A)$ $\Leftrightarrow$ there is a gnet $(f_\lambda) \in A$ such that $(f_\lambda) \rightarrow w$. 
\end{tw}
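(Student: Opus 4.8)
The plan is to prove the two implications separately, mirroring the classical Baskaran argument but adapting it to the fact that orphaned points lie outside their own neighbourhoods, which forces us to track $\mathcal{F}$-membership rather than set-membership.

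For the $(\Leftarrow)$ direction I would start from a gnet $(f_\lambda)$ with values in $A$ and $(f_\lambda)\rightarrow w$, and show $w\in\mathcal{F}Cl(A)$. By Definition \ref{fclosur} it suffices to check that every $G\in\mathcal{F}_w$ meets $A$. Fix such a $G$. Convergence gives $\lambda_0\in P$ with $f_\lambda\in G$ for all $\lambda\geq\lambda_0$; in particular $f_{\lambda_0}\in G$. Since the gnet takes values in $A$ we also have $f_{\lambda_0}\in A$, so $f_{\lambda_0}\in G\cap A$ and hence $G\cap A\neq\emptyset$. (Note that by Lemma \ref{pusty}, $\emptyset\notin\mathcal{F}_w$, so this is consistent; in fact the argument itself never needs that remark since $f_{\lambda_0}\in G$ already forbids $G=\emptyset$.) This direction is the easy one and requires no case split on whether $w\in\bigcup\mu$.

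For the $(\Rightarrow)$ direction, assume $w\in\mathcal{F}Cl(A)$, so every $G\in\mathcal{F}_w$ satisfies $G\cap A\neq\emptyset$. I would take as index set $P=\mathcal{F}_w$ ordered by reverse inclusion: $G_1\leq G_2\Leftrightarrow G_2\subseteq G_1$. Since $\mathcal{F}Cl(A)$ is defined by a universal quantifier over $\mathcal{F}_w$, the hypothesis lets us \emph{choose}, for each $G\in\mathcal{F}_w$, a point $f_G\in G\cap A$; this defines a gnet $f\colon P\to W$ with all values in $A$. It remains to show $(f_G)\rightarrow w$, i.e. for every $H\in\mathcal{F}_w$ the gnet is eventually in $H$. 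Taking $\lambda_0=H\in P$, for any $G\geq H$ we have $G\subseteq H$, hence $f_G\in G\cap A\subseteq H$. So $(f_G)$ is eventually in $H$, which is exactly $\rightarrow$-convergence to $w$. One subtlety to address explicitly: if $\mathcal{F}_w=\emptyset$ then $P=\emptyset$ and there is no gnet on it; but in that case $w\in\mathcal{F}Cl(A)$ holds vacuously for \emph{every} $A$, and we can instead use $A\neq\emptyset$ to pick any point $a\in A$ and take the constant gnet $(a)$ on an arbitrary nonempty $P$ — it converges to $w$ vacuously since $\mathcal{F}_w=\emptyset$ imposes no condition. So the hypothesis $A\neq\emptyset$ is used precisely here.

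The main obstacle, such as it is, is bookkeeping rather than depth: one must be careful that the chosen index set $\mathcal{F}_w$ need not be directed (two neighbourhoods of $w$ need not have a common refinement inside $\mathcal{F}_w$), but this is harmless because the paper's definition of gnet only requires a poset, not a directed set, and the eventuality condition for convergence was stated for arbitrary posets. The only genuine care-point is the degenerate case $\mathcal{F}_w=\emptyset$, which is why I would handle it as a separate opening remark in the $(\Rightarrow)$ argument before constructing the canonical gnet.
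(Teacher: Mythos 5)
Your proposal is correct and takes essentially the same approach as the paper: for $(\Rightarrow)$ the paper also indexes the gnet by $P=\mathcal{F}_w$ under reverse inclusion, chooses $f(G)\in G\cap A$, and treats the degenerate case $\mathcal{F}_w=\emptyset$ separately via an arbitrary gnet in $A$ converging vacuously; the $(\Leftarrow)$ direction is identical. No gaps.
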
 

\begin{proof}
($\Rightarrow$) Assume that $w \in \mathcal{F}Cl(A)$. There are two possibilites. First, $\mathcal{F}_{w} = \emptyset$. In this case we may assume that $P = 2^{W} \setminus \{ \emptyset \}$ and $C \geq D \Leftrightarrow C \subseteq D$. We define $f: P \rightarrow W$ in such a way that $f(C) \in A$. Clearly, $(f_\lambda)$ becomes a gnet in $A$ and moreover $(f_\lambda) \rightarrow w$ (because there are no any sets in $\mathcal{F}_{w}$, so we can say anything about them). 

Second option is that $\mathcal{F}_{w} \neq \emptyset$. Here we assume that $P = \mathcal{F}_{w}$. As for the $\geq$, it is defined as above. Note that (from the very definition of $\mathcal{F}Cl(A)$) for any $G \in \mathcal{F}_{w}$, $G \cap A \neq \emptyset$. Then assume \footnote{This reasoning is based on the one for ordinary generalized neighbourhoods, presented in \cite{baskar}. However, there is a mistake there (probably a typo). The author assumed only that $f(G) \in G$ (we use our notation). Clearly, we must assume that our gnet is in the (nonempty) intersection of neighbourhood and the set $A$.} that $f(G) \in G \cap A$ for any $G \in \mathcal{F}_{w}$. Then $(f_\lambda)$ is a gnet in $A$ and for any $G \in \mathcal{F}_{w}$ our gnet is eventually in $G$, i.e. $(f_\lambda) \rightarrow w$. 

($\Leftarrow$) Assume that there is a gnet $(f_\lambda)$ in $A$ such that $(f_\lambda) \rightarrow w$. Hence, for any $G \in \mathcal{F}_{w}$, $(f_\lambda)$ is eventually in $G$, which means that for any $G \in \mathcal{F}_{w}$ there is $\lambda_0$ such that for any $\lambda \geq \lambda_0$, $f_\lambda \in G$. But for any $\lambda$, $f_\lambda \in A$. Hence, for any $G \in \mathcal{F}_{w}$, $G \cap A \neq \emptyset$. Thus $w \in \mathcal{F}Cl(A)$. Moreover, due to the Lemma \ref{close}, $w \in Cl(A)$. 

\end{proof}

Is it possible to replace $\rightarrow$-convergence by $\rightarrow^{\mathcal{E}}$-convergence? Of course, if $w \in \mathcal{F}Cl(A)$, then we can find our expected $\rightarrow$-convergent gnet, as it has been shown above: and this gnet is (by means of Lemma \ref{econv}) $\rightarrow^{\mathcal{E}}$-convergent. But the converse is not true. Let us think about the following (counter)-example.

\begin{przy}
\normalfont{
Let $\langle W, \mu, \mathcal{F} \rangle$ be a \gtf-structure where $W = \{w, v, u\}$, $\mu = \{\emptyset, \{w\}\}$, $\mathcal{F}_{v} = \{ \{w\}, \{u\} \}$, $\mathcal{F}_{u} = \emptyset$. Of course, $\mathcal{F}_{w} = \{ \{w \} \}$. Consider $A = \{v \}$ and the constant gnet $(v)$. Clearly, $(v)$ is a gnet in $A$. It is $\rightarrow^{\mathcal{E}}$-convergent (at least to $v$). 

Now $v \notin \mathcal{F}Cl(A)$ because there is $G = \{u\} \in \mathcal{F}_{v}$ such that $G \cap \{v\} = G \cap A = \{u\} \cap \{v\} = \emptyset$. In fact, $\mathcal{F}Cl(A) = \mathcal{F}Cl(\{v\}) = \{z \in W; \text{ for any } G \in \mathcal{F}_{z}, G \cap \{v\} \neq \emptyset\} = \{u\}$ (because there are no any sets in $\mathcal{F}_{u}$).
}
\end{przy}

We have introduced $\mathcal{F}$-interiors (closures) to speak later about $\mathcal{F}$-open (closed) sets. Then we have discussed the set $\mathcal{E}_{w}$ for an arbitrary $w$. One could ask: does it make sense to move these notions on even more high level? We wish to treat this issue very briefly.

\begin{df}
Assume that $\langle W, \mu, \mathcal{F} \rangle$ is a \gtf-structure and $A \subseteq W$. We say that:

\begin{itemize}
\item $w \in \mathcal{E}Int(A)$ $\Leftrightarrow$ there is $S \in \mathcal{E}_{w}$ such that $S \subseteq A$.
\item $w \in \mathcal{E}Cl(A)$ $\Leftrightarrow$ for any $S \in \mathcal{E}_{w}$, $S \cap A \neq \emptyset$.
\end{itemize}

We say that $A$ is $\mathcal{E}$-open if and only if $\mathcal{E}Int(A) = A$ and that $A$ is $\mathcal{E}$-closed if and only if $\mathcal{E}Cl(A) = A$. 
\end{df}

Although on this stage of research such definitions seem to be somewhat artificial, there is at least one interesting thing to note. 

\begin{tw}
Let $\langle W, \mu, \mathcal{F} \rangle$ be a \gtf-structure and $\mathcal{EO}$ is a collection of all $\mathcal{E}$-open sets (with respect to $\mu$ and $\mathcal{F}$). Then $\mathcal{EO}$ forms a generalized topology on $W$. If for any $w \in W$, $\mathcal{F}_{w} \neq \emptyset$, then $\mathcal{EO}$ is a strong generalized topology.
\end{tw}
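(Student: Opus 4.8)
The plan is to check the two defining closure conditions of a generalized topology straight from the definition of $\mathcal{E}$-open set, exploiting a pleasant feature that $\mathcal{E}Int$ has but $\mathcal{F}Int$ does not: for \emph{every} $A \subseteq W$ we have $\mathcal{E}Int(A) \subseteq A$. Indeed, if $w \in \mathcal{E}Int(A)$ then there is $S \in \mathcal{E}_{w}$ with $S \subseteq A$, and since $S$ is an $\mathcal{F}$-open set to which $w$ belongs, $w \in S \subseteq A$. Alongside this I would record the obvious monotonicity $A \subseteq B \Rightarrow \mathcal{E}Int(A) \subseteq \mathcal{E}Int(B)$, proved exactly like the analogous lemma for $\mathcal{F}Int$: a witness $S \in \mathcal{E}_{w}$ with $S \subseteq A$ is also a witness $S \subseteq B$.

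With these two observations, $\emptyset \in \mathcal{EO}$ is immediate, because $\mathcal{E}Int(\emptyset) \subseteq \emptyset$ and the reverse inclusion is trivial, so $\mathcal{E}Int(\emptyset) = \emptyset$; no hypothesis about whether $\emptyset$ lies in some $\mathcal{F}_{z}$ is needed, since $\emptyset$ can never belong to any $\mathcal{E}_{w}$. For arbitrary unions, suppose $J \neq \emptyset$ and each $A_{i}$, $i \in J$, is $\mathcal{E}$-open. Pick $w \in \bigcup_{i \in J} A_{i}$, say $w \in A_{k}$. Since $A_{k} = \mathcal{E}Int(A_{k})$, there is $S \in \mathcal{E}_{w}$ with $S \subseteq A_{k} \subseteq \bigcup_{i \in J} A_{i}$, whence $w \in \mathcal{E}Int(\bigcup_{i \in J} A_{i})$. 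This gives $\bigcup_{i \in J} A_{i} \subseteq \mathcal{E}Int(\bigcup_{i \in J} A_{i})$, and together with the automatic inclusion $\mathcal{E}Int(\bigcup_{i \in J} A_{i}) \subseteq \bigcup_{i \in J} A_{i}$ we conclude $\bigcup_{i \in J} A_{i} \in \mathcal{EO}$. Hence $\mathcal{EO}$ is a generalized topology on $W$ (and, as expected, nothing forces closure under finite intersections, which is not required).

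For the ``strong'' clause I would note that $W \in \mathcal{EO}$ iff $\mathcal{E}Int(W) = W$, and since $\mathcal{E}Int(W) = \{w \in W ; \mathcal{E}_{w} \neq \emptyset\}$ (the requirement $S \subseteq W$ being vacuous for any $S \in \mathcal{E}_{w}$), this is equivalent to $\mathcal{E}_{w} \neq \emptyset$ for every $w \in W$. By Theorem \ref{eachopen} we have $\mathcal{E}_{w} \neq \emptyset \Leftrightarrow \mathcal{F}_{w} \neq \emptyset$, so the hypothesis $\mathcal{F}_{w} \neq \emptyset$ for all $w \in W$ yields $W \in \mathcal{EO}$, i.e. $\bigcup \mathcal{EO} = W$, which is precisely strength.

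I do not anticipate a genuine obstacle: the whole argument rests on the inclusion $\mathcal{E}Int(A) \subseteq A$, which makes $\mathcal{E}Int$ behave like an ordinary interior operator and reduces the union condition to the one-line computation above. The only place calling for a little care is the empty set, where one should resist importing irrelevant hypotheses about $\emptyset \in \mathcal{F}_{z}$, since membership in $\mathcal{E}_{w}$ already presupposes $w \in S$ and hence $S \neq \emptyset$.
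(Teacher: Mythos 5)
Your proposal is correct and follows essentially the same route as the paper: the paper's proof of $\mathcal{E}Int(\bigcup_{i}X_i)\subseteq\bigcup_i X_i$ and of $\mathcal{E}Int(\emptyset)=\emptyset$ is exactly your observation that $w\in S$ for every $S\in\mathcal{E}_w$ forces $\mathcal{E}Int(A)\subseteq A$, which you merely isolate as a preliminary lemma, and the reverse inclusion for unions and the use of Theorem~\ref{eachopen} for the strong clause coincide with the paper's argument.
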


\begin{proof}
First, let us prove that $\emptyset$ is $\mathcal{E}$-open. Compute: $\mathcal{E}Int(\emptyset) = \{z \in W; \text{ there is } S \in \mathcal{E}_{w} \text { such that } S \subseteq \emptyset\} = \emptyset$. It is because the only set contained in $\emptyset$ is empty set itself - but for any $S \in \mathcal{E}_{w}$, $w \in S$, hence $S \neq \emptyset$. 

Assume now that $J \neq \emptyset$ and for any $i \in J$, $X_{i}$ is $\mathcal{E}$-open. Then $\bigcup X_{i}$ is also $\mathcal{E}$-open, i.e. $\mathcal{E}Int(\bigcup_{i \in J} X_i) = \bigcup X_{i}$. 

($\subseteq$) Let $w \in \mathcal{E}Int(\bigcup X_i)$. Hence, there is $S \in \mathcal{E}_{w}$ such that $S \subseteq \bigcup_{i \in J} X_i$. But $w \in S$. Hence $w \in \bigcup_{i \in J} X_i$. 

($\supseteq$) Let $w \in \bigcup_{i \in J} X_i$. Then there is $X_k$ such that $w \in X_k$. $X_k$ is $\mathcal{E}$-open, so $w \in \mathcal{E}Int(X_k)$. Thus there is $S \in \mathcal{E}_{w}$ such that $S \subseteq X_k \subseteq \bigcup_{i \in J} X_i$. Hence, $w \in \mathcal{E}Int(\bigcup_{i \in J}X_i)$. 

Now assume that $\mathcal{F}_{w} \neq \emptyset$ for any $w \in W$. Then $\mathcal{E}_{w} \neq \emptyset$. Hence, $\mathcal{E}Int(W) = \{z \in W; \text{ there is } S \in \mathcal{E}_{w}  \text{ such that } S \subseteq W\} = W$. 
\end{proof}

Moreover, it is always true that $\mathcal{E}Int(\bigcap_{i \in J} X_i \subseteq \bigcap X_i$ (even if none of the sets indexed by $J$ is $\mathcal{E}$-open). Assume that $w \in \mathcal{E}Int(\bigcap_{i \in J}X_i$. Hence, there is $S \in \mathcal{E}_{w}$ such that $S \subseteq \bigcap_{i \in J} X_i$. Hence, for any $X_i$ we can say that $S \subseteq X_i$. But by the very definition of $\mathcal{E}_{w}$, $w \in S$. So $w$ is in each $X_i$, i.e. it is in $\bigcap_{i \in J} X_i$. 

\section{Further investigations}

We would like to investigate the structures and functions presented above. It would be cognitively valuable to establish analogues of various topological notions (like density, nowhere density, connectedness, continuity etc.) both in the context of $\mu$-topology with $\mathcal{F}$ and in the context of $\mathcal{E}$ (i.e. $\mathcal{F}$-open and $\mathcal{E}$-open sets). It would be very natural to impose certain restrictions on $\mathcal{F}$ (that, for example, it is closed under supersets or unions). Logical applications of our new notions are also interesting: because in the presence of $\mathcal{F}$ we can discuss two types of possible worlds; those in $\bigcup \mu$ and those in $W \setminus \bigcup \mu$. Their logical "strenght" (in terms of validating given formulas and rules) is different.

\end{document}